\theoremstyle{plain}
\newtheorem{Theorem}{Theorem}
\newtheorem{theorem}[Theorem]{Theorem}
\newtheorem{lemma}[Theorem]{Lemma}
\newtheorem{proposition}[Theorem]{Proposition}
\newtheorem{corollary}[Theorem]{Corollary} 
\newtheorem{definition}[Theorem]{Definition}
\newtheorem{conjecture}[Theorem]{Conjecture}
\theoremstyle{definition}
\newtheorem{example}[Theorem]{Example}
\newcommand\xqed[1]{%
  \leavevmode\unskip\penalty9999 \hbox{}\nobreak\hfill
  \quad\hbox{#1}}
\newcommand\demoo{\xqed{$\triangle$}}
\newcommand{\demoeq}{\tag*{$\circ$}}
\newcommand{\demooeq}{\tag*{$\triangle$}}
\theoremstyle{remark}
\newtheorem{remark}[Theorem]{Remark}
\numberwithin{Theorem}{section}
\numberwithin{equation}{section}
\newcommand{\md}{{\, \mathrm{d} }}
\newcommand{\tuborg}[1]{\{ #1 \}}
\newcommand{\Prob}{\mathbb{P}}
\DeclareMathAlphabet{\mathpzc}{OMS}{pzc}{m}{it}
\newcommand{\J}{\mathpzc{J}}
\newcommand{\N}{\mathbb{N}}
\newcommand{\vect}[1]{\pmb{#1}}
\newcommand{\mat}[1]{\boldsymbol{\bm #1}}
\newcommand*\expandableInput[1]{\@@input#1 }
\newcommand{\Rmnum}[1]{\expandafter\@slowromancap\romannumeral #1@}
\newcommand*\bigcdot{\mathpalette\bigcdot@{.5}}
\newcommand*\bigcdot@[2]{\mathbin{\vcenter{\hbox{\scalebox{#2}{$\m@th#1\bullet$}}}}}
\tikzset{
    >=stealth',
    punkt/.style={
           rectangle,
           rounded corners,
           draw=black, thick,
           text width=7em,
           minimum height=2em,
           text centered},
    punktl/.style={
           re
           tangle,
           rounded corners,
           draw=black, thick,
           
           text width=7em,
           minimum height=2em,
           text centered},
    pil/.style={
           ->,
           shorten <=4pt,
           shorten >=4pt,},
    pildotted/.style={
           ->,
           shorten <=4pt,
           shorten >=4pt,
  dotted,
  }
}
\let\@fnsymbol\@arabic
\newcommand\iay{\mathscr{i}}
\newcommand\jay{\mathscr{j}}
\newcommand\kay{\mathscr{k}}
\patchcmd{\appendices}{\quad}{: }{}{}
\title{Aggregate Markov models in life insurance: properties and valuation}
\author[1,$\star$]{Jamaal Ahmad}
\author[1]{Mogens Bladt}
\author[1]{Christian Furrer}
\affil[1]{\footnotesize Department of Mathematical Sciences, University of Copenhagen, Universitetsparken 5, DK-2100 Copenhagen \O, Denmark.}
\affil[$\star$]{\footnotesize Corresponding author. E-mail: \href{mailto:jamaal@math.ku.dk}{jamaal@math.ku.dk}.}
\date{ }
\begin{document}
\maketitle{}


\begin{center}
{\sc Abstract}
\end{center}

{\small In multi-state life insurance, an adequate balance between analytic tractability, computational efficiency, and statistical flexibility is of great importance. This might explain the popularity of Markov chain modelling, where matrix analytic methods allow for a comprehensive treatment.  Unfortunately, Markov chain modelling is unable to capture duration effects, so this paper presents aggregate Markov models as an alternative. Aggregate Markov models retain most of the analytical tractability of Markov chains, yet are non-Markovian and thus more flexible. Based on an explicit characterization of the fundamental martingales, matrix representations of the expected accumulated cash flows and corresponding prospective reserves are derived for duration-dependent payments with and without incidental policyholder behaviour. Throughout, special attention is given to a semi-Markovian case. Finally, the methods and results are illustrated in a numerical example.

\vspace{5mm}

\textbf{Keywords:} Multi-state modelling, {duration dependence}, product integrals, expected cash flows, phase-type distributions. 
\vspace{5mm}

\textbf{2020 Mathematics Subject Classification:} 91G05, 60J27, 60J28, 91G60.

\textbf{JEL Classification:} G22, C63.

}

 	 	 	

\section{Introduction}\label{sec:intro}

In this paper, we propose a new class of multi-state models, the so-called aggregate Markov models, and study the valuation of life insurance liabilities for this class of models. The results established range from a characterization of the fundamental martingales to genuine computational schemes for the quantities of interest, including prospective reserves. In contrast, the companion paper~\cite{AhmadBladt} deals with the statistical aspects. 

The classic approach to multi-state modelling consists of Markov chain modelling, where the process governing the state of the insured $Z$ is taken to be a time-inhomogeneous Markov chain and the payments between the insured and the insurer are required to consist of deterministic sojourn and transition payments. This approach dates back to at least~\cite{hoem69}, but has recently been given new life via matrix analytic methods, see~\cite{Bladt2020,Ahmad2022}. Although Markov chain modelling is attractive due to its inherent simplicity, it suffers from a number of defects, not at least its inability to properly capture duration effects. In recent years, semi-Markov modelling has therefore gained considerable attention. In semi-Markov modelling, see~\cite{Hoem1972,helwich,christiansen2012,BuchardtMollerSchmidt}, only the joint process $(Z,U)$, where $U$ denotes the time spent in the current state, is assumed to be Markovian, and the sojourn and transition payments are allowed to depend on $U$. Unfortunately, semi-Markov modelling entails less analytical tractability and increased computational load. 

In an aggregate Markov model, each observable state is assumed to consist of multiple unobservable sub-states, and only the full model consisting of all sub-states is assumed to be Markovian. Different to the classic Markov chains, which require the Markov property already for the observable states, aggregate Markov models are non-Markovian and thus flexible, yet they retain most of the analytical tractability of Markov chains. In particular, matrix analytic methods related to inhomogeneous phase-type distributions, confer with~\cite{Albrecher-Bladt-2019}, are applicable and lead to a unifying and transparent treatment. 

Phase-type distributions have a long history of extensive use in applied probability. They have been employed in areas such as queueing theory~\cite{asmussen2003applied,neuts1981,neuts2021structured,latouche1999introduction}, actuarial science~\cite{ruin,bladt2017matrix}, and telecommunications~\cite{asmussen2003applied,latouche1999introduction}, where the phase-type assumption leads to exact and in many cases explicit formulas for properties such as waiting time distributions, queue length, ruin probabilities, and buffer overflows. 

Both homogeneous as well as inhomogeneous phase-type distributions are dense in the class of distributions on the positive reals, confer with~\cite{bladt2017matrix}, and therefore able to approximate any non-negative distribution arbitrarily close -- in the sense of weak convergence as the number of phases increases to infinity. Hence the class of phase-type distributions has been considered as striking a balance between tractability and generality. Inhomogeneous phase-type distributions may be used instead of homogeneous ones, and this might be particularly relevant if the tail behaviour is known to be different from exponential, see~\cite{Albrecher-Bladt-2019}.

In the area of queuing theory, so-called quasi-birth-and-death (QBD) processes have been extensively studied, confer with~\cite{latouche1999introduction} {and references therein}, as a model for the number of customers in a queue. They constitute the time-homogeneous analogue to the aggregate Markov models considered here.

The first main contribution of the paper is an explicit characterization of the martingales for the associated counting processes $N$, which reveals that aggregate Markov models may be highly non-Markovian. For many practical purposes, less might suffice. To this end, we provide a sort of \textit{reset property} under which the aggregate Markov model is actually semi-Markovian. The second main contribution of the paper are matrix representations for the expected accumulated cash flows, and hereby the prospective reserves, for duration-dependent payments with and without incidental policyholder behaviour. Special attention is given to the case where the payments are duration independent; here our results indicate that aggregate Markov modelling may hold a competitive advantage over semi-Markov modelling.

The remainder of the paper is structured as follows. Section~\ref{sec:prelims} provides some background, with Subsection~\ref{sec:inhom_phasetype} devoted to the basics of inhomogeneous phase-type distributions and Subsection~\ref{sec:multi-state} to the basics of multi-state modelling in life insurance. These subsections might be passed over by readers who are familiar with the subject matter. In Section~\ref{sec:setup}, we introduce the aggregate Markov models and state the aforementioned reset property. The main contributions take place in Section~\ref{sec:propZ} and Section~\ref{sec:reserves_cf}. The former is devoted to the distributional properties of $Z$, including the characterization of the fundamental martingales, while the latter deals with the valuation of life insurance liabilities and contains, in particular, matrix representations of the expected accumulated cash flows. To showcase the practical potential of aggregate Markov modelling, {Section~\ref{sec:num} contains a numerical example. Finally, Section~\ref{sec:conc} concludes.} Proofs may be found in Appendix~\ref{ap:A}. 

\section{Preliminaries}\label{sec:prelims}

Before introducing the setting of the paper, we provide some background. Subsection~\ref{sec:inhom_phasetype} contains a short review on imhomogeneous phase-type distributions, which play a critical role later. This review is followed by Subsection~\ref{sec:multi-state}, which collects some insights on multi-state modelling in life insurance, in particular in relation to Markov chain and semi-Markov modelling, hereby motivating our aggregate setup. The actual presentation of our setup is postponed to Section~\ref{sec:setup}.

In what follows and throughout the paper, we denote the product integral of a square matrix function $\mat{A}(x)$ as  
\begin{align*}
\mat{F}(t,s)=\Prodi_t^s \!\left(\mat{I}+\mat{A}(x)\md x\right)\!,
\end{align*}
where $\mat{I}$ is the identity matrix. Under suitable regularity conditions, it may equivalently be cast as the solution to Kolmogorov's forward and backward differential equations:
\begin{align*}
\frac{\partial}{\partial s}\mat{F}(t,s)&=\mat{F}(t,s)\mat{A}(s), &&\mat{F}(t,t) = \mat{I}, \\
\frac{\partial}{\partial t}\mat{F}(t,s)&=-\mat{A}(t)\mat{F}(t,s), &&\mat{F}(s,s) = \mat{I}. 
\end{align*}
{ In other words, the computation of a product integral is equivalent to solving a system of first order, in general inhomogeneous, ordinary differential equations. In particular, a standard Runge--Kutta scheme may be employed. Alternatively, the solution may be approximated by a product of matrix exponentials, using a piece-wise constant square matrix function. Further details may be found in~\cite[Section~2]{Bladt2020}.} For a survey on {the probabilistic and statistical aspects of} product integration, we refer to~\cite{GillJohansen} and, concerning applications to life insurance, also~\cite{milbrodtstracke1997,Bladt2020}. 

\subsection{Inhomogeneous phase-type distributions}\label{sec:inhom_phasetype}

In this subsection, we review the notion of inhomogeneous phase-type (IPH) distributions introduced in~\cite{Albrecher-Bladt-2019}. Consider a smooth and suitably regular time-inhomogeneous Markov jump process $X=\{X(t)\}_{t\geq 0}$ on the finite state space $\J = \{1,\ldots,J-1,J\}$, where the states $\{1,\ldots,J-1\}$ are transient while $J$ is absorbing. The transition intensity matrix function $\mat{M}(t) = \{\mu_{ij}(t)\}_{i,j\in \J}$ of $X$ is then on the form
\begin{align*}
\mat{M}(t) = 
\begin{pmatrix}
\mat{T}(t) & \mat{t}(t) \\
0 & 0
\end{pmatrix}\!,
\end{align*}
where $\mat{T}(t)$ is a sub-intensity matrix function consisting of transition rates between the transient states and $\mat{t}(t) = -\mat{T}(t)\vect{1}_{J}$ is a column vector of transition rates to the absorbing state, the so-called exit rate vector function. Further, assume that $\Prob(X(0) = J)=0$ and denote by $\mat{\pi} = (\pi_1,\ldots,\pi_{J-1})$ the remaining vector of initial probabilities $\pi_j = \Prob(X(0) = j)$. The time until absorption, given by 
\begin{align*}
\tau = \inf\{t\geq 0 \, : \, X(t) = J\},
\end{align*}
is then said to be an inhomogeneous phase-type distribution with representation $(\vect{\pi},\vect{T})$, and we write $\tau\sim \mathrm{IPH}(\vect{\pi},\vect{T})$.  

The transition probability matrix function $\mat{P}(t,s) = \{p_{ij}(t,s)\}_{i,j\in \J}$ with elements
\begin{align*}
p_{ij}(t,s) = \Prob\!\left(\!\left. X(s) = j\, \right|  X(t) = i\right)
\end{align*}
is given as the product integral of the transition intensity matrix function:
\begin{align*}
\mat{P}(t,s) = \Prodi_t^s \!\left(\mat{I}+\mat{M}(x)\md x\right)\!. 
\end{align*}
The probability density function $f(t)$ and distribution function $F(t)$ of $\tau$ may then be obtained through product integrals of the sub-intensity matrix function ${\mat{T}}(t)$:
\begin{align}
f(x) &= \vect{\pi}\Prodi_0^x \!\left(\mat{I}+\mat{T}(u)\md u\right)\!\vect{t}(x), \\ \label{eq:dist_prodint}
F(x) &= 1-\vect{\pi}\Prodi_0^x \!\left(\mat{I}+\mat{T}(u)\md u\right)\!\vect{1}_{J}.
\end{align}
From these, one finds the following conditional distribution: 
\begin{align}\label{eq:cond_IPH}
\Prob\!\left(\left. \tau > s+t\, \right| \, \tau > s\right) = \frac{\vect{\pi}\displaystyle\Prodi_0^s \!\left(\mat{I}+\mat{T}(x)\md x\right)\! }{\vect{\pi}\displaystyle\Prodi_0^s \!\left(\mat{I}+\mat{T}(x)\md x\right)\!\vect{1}_{J}}\Prodi_s^{s+t} \!\left(\mat{I}+\mat{T}(x)\md x\right)\!\vect{1}_{J},
\end{align}
which entails that
\begin{align*}
\tau - s\, | \, \tau > s \sim \mathrm{IPH}\!\left(\vect{\alpha}(s),\vect{T}(s+\bigcdot)\right)\!, 
\end{align*}
where  $\vect{\alpha}(s)$ is given by
\begin{align*}
\vect{\alpha}(s) = \frac{\vect{\pi}\displaystyle\Prodi_0^s \!\left(\mat{I}+\mat{T}(x)\md x\right)\! }{\vect{\pi}\displaystyle\Prodi_0^s \!\left(\mat{I}+\mat{T}(x)\md x\right)\!\vect{1}_{J}}. 
\end{align*}
In other words, for IPH distributions, the overshoot is again IPH-distributed.
\begin{example}\label{ex:singe_phase}
In the case of a single phase, that is $J=2$, we have 
\begin{align*}
\Prodi_t^s \!\left(\mat{I}+\mat{T}(x)\md x\right) = e^{-\int_t^s\mu_{12}(x)\md x},
\end{align*}
giving the density and distribution functions
\begin{align*}
f(x) &= e^{-\int_0^x \mu_{12}(v)\md v}\mu_{12}(x), \\
F(x) &= 1-e^{-\int_0^x \mu_{12}(v)\md v},
\end{align*}
while the conditional distribution~\eqref{eq:cond_IPH} takes the form
\begin{align*}
\Prob\!\left(\left. \tau > s+t\, \right| \, \tau > s\right) = \frac{e^{-\int_0^s \mu_{12}(v)\md v} }{e^{-\int_0^s \mu_{12}(v)\md v}}e^{-\int_s^{s+t} \mu_{12}(v)\md v} = e^{-\int_s^{s+t} \mu_{12}(v)\md v}.\demoeq
\end{align*}
\end{example}

\subsection{Multi-state modelling}\label{sec:multi-state}

Insurance contracts may be modelled as a stream of payments $B=\{B(t)\}_{t\geq0}$, benefits less premiums, between the insured and the insurer. In life insurance, including health and disability insurance and pensions, the payments depend on the state of the insured, leading to so-called multi-state modelling. In general, the state of the insured $Z=\{Z(t)\}_{t\geq0}$ is a non-explosive jump process on a typically finite state space $\mathcal{J} = \{1,2,\ldots,J\}$, $J\in\mathbb{N}$, while the payments are typically finite variation processes adapted to the information generated by $Z$.
 
\subsubsection*{Markov chain models}

The most classic approach to multi-state modelling is (smooth) Markov chain models, where $Z$ is taken to be a time-inhomogeneous Markov jump process (Markov chain) with suitably regular transition rates $\nu_{jk}(t)$, so that
\begin{align*}
\nu_{jk}(t) = \lim_{h \downarrow 0} \frac{\mathbb{P}(Z(t+h) = k \, | \, Z(t) = j)}{h}.
\end{align*}
Using the standard convention $\nu_{jj}(t) = - \sum_{k \in \mathcal{J}\atop k \neq j} \nu_{jk}(t)$, the square matrix function with indices $\nu_{jk}(t)$ is then the transition intensity matrix function of $Z$. In addition to the Markov assumption, the payments are assumed to take the form
\begin{align*}
\mathrm{d}B(t) = \sum_{j \in \mathcal{J}} \bigg( 1_{(Z(t) = j)} b_j(t)\, \mathrm{d}t + \sum_{k \in \mathcal{J}\atop k \neq j} b_{jk}(t) \, \mathrm{d}N_{jk}(t)\bigg), \quad B(0) \in \mathbb{R},
\end{align*}
for suitably regular deterministic sojourn payment rates $b_j(t)$ and transition payments $b_{jk}(t)$ depending only on time. Here $N$ is the multivariate counting process associated to $Z$ with components $N_{jk} = \{N_{jk}(t)\}_{t\geq0}$ given by
\begin{align*}
N_{jk}(t)=\# \{s\in (0,t] : Z(s-)=j, Z(s) = k \}.
\end{align*}
Markov chain modelling dates back to at least~\cite{hoem69} and was popularized in~\cite{norberg1991}.

Regarding the valuation of life insurance liabilities, calculating the so-called expected accumulated cash flow $A(t,s)$ is key. For Markov chain models, the expected accumulated cash flow $A(t,s)$ is given by $A(t,s) = \sum_{i\in\mathcal{J}} 1_{(Z(t) = i)} A_i(t,s)$, where
\begin{align*}
A_i(t,s)
&=
\mathbb{E}[ B(s) - B(t) \, | \, Z(t) = i] \\
&=
\sum_{j \in \mathcal{J}} \int_t^s \mathbb{P}(Z(u) = j \, | \, Z(t) = i) \bigg( b_j(u) + \sum_{k \in \mathcal{J}\atop k \neq j} \nu_{jk}(u) b_{jk}(u)\bigg) \mathrm{d}u.
\end{align*}
The transition probabilities, considered as a square matrix function, are given as the product integral of the transition rates, also considered as a square matrix function. In other words, the transition probabilities may be found simply by solving Kolmogorov's forward differential equations.

It is, of course, possible, and of interest, to relax both the Markov assumption as well as the structure of the payments. In doing so, it is critical to strike an adequate balance between analytic tractability, computational efficiency, and statistical flexibility.

\subsubsection*{Semi-Markov models}

A more modern approach is semi-Markov modelling, see for instance~\cite{helwich,christiansen2012,BuchardtMollerSchmidt}. There are two major differences between (smooth) semi-Markov modelling and (smooth) Markov chain modelling. First, the jump process $Z$ describing the state of the insured is no longer required to be Markovian; rather, $(Z,U)$ is assumed Markovian, where $U=\{U(t)\}_{t\geq 0}$ is the duration since the last transition given by
\begin{align}\label{eq:U_def}
U(t) = \sup\!\big\{ s\in [0,t] :  Z(u) = Z(t)  \text{ for all }  u\in [t-s,t]\big\}.
\end{align}
Therefore, the model can no longer be described by transition rates that solely depend on time. Instead, the transition rates are now functions of both time and duration, written $\nu_{jk}(t,u)$.

Second, the payments take the more general form
\begin{align}\label{eq:payments_duration}
\mathrm{d}B(t) = \sum_{j \in \mathcal{J}} \bigg( 1_{(Z(t) = j)} b_j(t,U(t))\, \mathrm{d}t + \sum_{k \in \mathcal{J}\atop k \neq j} b_{jk}(t,U(t-)) \, \mathrm{d}N_{jk}(t)\bigg), \quad B(0) \in \mathbb{R},
\end{align}
for suitably regular deterministic sojourn payment rates $b_j(t,u)$ and transition payments $b_{jk}(t,u)$ depending on time and duration.

For semi-Markov models, the expected accumulated cash flow $A(t,s)$ depends on both the current state and current duration. To clarify, it may actually be decomposed according to $A(t,s) = \sum_{i\in\mathcal{J}} 1_{(Z(t) = i)} A_{i,U(t)}(t,s)$, where
\begin{align*}
A_{i,u}(t,s)
&=
\mathbb{E}[ B(s) - B(t) \, | \, Z(t) = i, U(t) = u] \\
&=
\sum_{j \in \mathcal{J}}
\int_t^s
\int_0^{u+v-t}
\bigg( b_j(v,z) + \sum_{k \in \mathcal{J}\atop k \neq j} \nu_{jk}(v,z) b_{jk}(v,z)\bigg) p_{ij}(t,u,v,\mathrm{d}z) \, \mathrm{d}v, \\
p_{ij}(t,u,s,z) &= \mathbb{P}(Z(s) = j, U(s) \leq z \, | \, Z(t) = i, U(t) = u).
\end{align*}
The transition probabilities may be calculated by solving a system of integro-differential equations, confer with~\cite[Section~3]{BuchardtMollerSchmidt}. Numerical methods for integro-differential equations can, generally speaking, be rather intricate. The implementation of semi-Markov models is, therefore, non-trivial and may carry some operational risk.

\subsubsection*{Aggregate Markov models}

In this paper, we introduce a class of aggregate models that, similar to semi-Markov models, allow for added flexibility, such as duration dependence, but avoid some of the aforementioned numerical challenges posed by semi-Markov modelling.

Denote by $(T_n)_{n\in\mathbb{N}_0}$ the jump times of $Z$, where we employ the convention $T_0 = 0$. Returning to the case where $Z$ is Markovian with transition rates $\nu_{jk}(t)$, recall that
\begin{align*}
\Prob(T_{n+1} > t \, | \, T_0, Z(T_0), T_1, Z(T_1) \ldots, T_n , Z(T_n) = j)
=
e^{\int_{T_n}^t \nu_{jj}(x) \md x}, \quad t \geq T_n.
\end{align*}
We conclude that
\begin{align*}
T_{n+1} - T_n \, \Big| \, \big(T_i,Z(T_i)\big)_{i=0}^n \sim \mathrm{IPH}\!\left(1, \nu_{Z(T_n)Z(T_n)}(T_n+ \bigcdot)\right)\!.
\end{align*}
In other words, the sojourn times follow one-dimensional IPH distributions that are mostly independent of the past history of the jump process. This paper considers instead jump processes with sojourn times admitting conditional IPH distributions of general dimension. Hereby we shall be able to capture, for instance, duration dependence while avoiding the need for intricate numerical methods.

\section{Setup}\label{sec:setup}

In this section, we present the general setup of the paper. Subsection~\ref{subsec:prob_model} introduces the probabilistic model for the state of the insured, while Subsection \ref{subsec:payment_streams} introduces the payments between the insured and the insurer.

\subsection{Probabilistic model}\label{subsec:prob_model}

Similar to Subsection~\ref{sec:multi-state}, let $Z$ be a jump process governing the state of the insured, thus taking values in the finite set of (macrostates) $\mathcal{J} = \{1,2,...,J\}$, $J\in \N$. This set consists of biometric or behavioural states that are actually observed, for example active, disabled, free-policy, and dead. To allow for added flexibility, to each macrostate we may introduce additional sub-states (microstates) that are not observable.

To be specific, to each macrostate $j$, a number $d_j \geq 1$ of microstates are assigned. The resulting state space is therefore 
\begin{align*}
E=\{ \mat{\jay} = (j,\widetilde{j}) : j\in \mathcal{J}, \widetilde{j}\in \{1,2,...,d_j\}  \},
\end{align*}
and the total number of microstates is $\bar{d} = \sum_{j\in \J} d_j$. Elements of $E$ are generally denoted by bold letters such as $\mat{\jay} \in E$. Now introduce a time-inhomogeneous Markov jump process  $\vect{X}=\{\vect{X}(t)\}_{t\geq 0}=\{(X_1(t), X_2(t))\}_{t\geq 0}$ on the state space $E$ with transition intensity matrix function $\mat{M}(t)$. Then $X_1(t)$ keeps track of the macrostate, that is $Z(t) = X_1(t)$, while $X_2(t)$ identifies the current microstate contingent on the state of $X_1(t)$.

The transition intensity matrix function $\mat{M}(t)$ can be written on the following block form: 
\begin{equation}
  \mat{M}(t) =
\begin{pmatrix}
\mat{M}_{11}(t) & \mat{M}_{12}(t) & \cdots & \mat{M}_{1J}(t) \\
 \mat{M}_{21}(t) & \mat{M}_{22}(t) & \cdots & \mat{M}_{2J}(t) \\
\vdots & \vdots & \ddots & \vdots \\
 \mat{M}_{J1}(t) & \mat{M}_{J2}(t) & \cdots & \mat{M}_{JJ}(t) \\
\end{pmatrix}\!, \label{eq:Lambda}
\end{equation}
where $\mat{M}_{jj}(t)$ are sub-intensity matrix functions of dimension $d_j \times d_j$ providing transition rates between the microstates of macrostate $j$, and $\mat{M}_{jk}(t)$ are non-negative matrix functions of dimension $d_j \times d_k$ providing transition rates from microstates within macrostate $j$ to microstates within macrostate $k$.

We denote an element of $\mat{M}(t)$ by $\mu_{\mat{\jay}\mat{\kay}}(t)$, $\mat{\jay}, \mat{\kay} \in E$. The off-diagonal elements are non-negative, providing the jump rates between different states, while the diagonal equals the negative of the row sums of the off-diagonal elements. Consequently, rows all sum to zero, so $\mat{M}(t)$ is a proper transition intensity matrix function.

For simplicity, we assume that $Z(0) = X_1(0) \equiv 1$. For a full model identification, it then suffices to specify the initial distribution $\mat{\pi}_1(0)$ of $X_2(0)$ among the microstates $1,2,\ldots,d_1$. In other words, denoting the initial distribution of $\vect{X}$ by $\vect{\pi}$, we have that
\begin{align*}
\vect{\pi} = (\vect{\pi}_1(0),\bm{0}). 
\end{align*}
The column vector function
\begin{align}\label{eq:exit_rates}
\vect{m}_j(t) = -\mat{M}_{jj}(t)\vect{1}_{d_j} = \sum_{k\in \mathcal{J} \atop k\neq j} \mat{M}_{jk}(t)\vect{1}_{d_k}
\end{align}
contains the exit rate function out of macrostate $j$. This function is non-negative due to $\mat{M}_{jj}(t)$ being a sub-intensity matrix function. The last equality follows from the row sums of $\mat{M}(t)$ being zero.

In this paper, we give special attention to that case where $\mat{M}_{jk}$, $j, k \in \mathcal{J}$, $j \neq k$, is a matrix of rank one on the form
\begin{align}\label{eq:indep_cond}
\mat{M}_{jk}(t) = \vect{\beta}_{jk}(t) \vect{\pi}_k(t),
\end{align}
where $\vect{\beta}_{jk}(t)$ is a $d_j$-dimensional non-negative column vector function and $\vect{\pi}_k(t)$ is a $d_k$-dimensional non-negative row vector function with $\vect{\pi}_k(t) \vect{1}_{d_k} = 1$. Here $\vect{\beta}_{jk}(t)$ provides the vector of jump rates from the microstates of macrostate $j$ to the macrostate $k$, and $\vect{\pi}_k(t)$ denotes the initial distribution of $X_2(t)$ on $\{1,2,\ldots,d_k\}$ just after a transition of $X_1(t)$ to $k$. In this case,
\begin{align}\label{eq:exit_rates_indep}
\vect{m}_j(t) =  \sum_{k\in \mathcal{J} \atop k\neq j} \vect{\beta}_{jk}(t).
\end{align}
{
\begin{definition}\label{def:reset_prop}
If~\eqref{eq:indep_cond} holds, we say that $k$ has the reset property from state $j$. If all states $k$ have the reset property from all states $j \neq k$, we simply say \textbf{the} reset property is satisfied.
\end{definition}}
Another way of writing~\eqref{eq:indep_cond} is
\begin{align*}
\mu_{(j,\widetilde{j})(k,\widetilde{k})}(t) = \beta_{(j,\widetilde{j})k}(t)\pi_{(k,\widetilde{k})}(t), \quad k \neq j.
\end{align*}
\begin{remark}
Focusing only on the transition from macrostate $j$ to macrostate $k$, we look at the following elements of $\mat{M}(t)$:
\begin{align*}
\begin{pmatrix}
 & \vdots  & & \vdots  & \\
 \cdots & \mat{M}_{jj}(t) & \cdots & \vect{\beta}_{jk}(t)\vect{\pi}_k(t) & \cdots  \\
 & \vdots & \ddots & \vdots & \\
 \cdots & \cdots & \cdots   & \mat{M}_{kk}(t) & \cdots \\
 & \vdots & & \vdots  & 
\end{pmatrix}\!.
\end{align*}
Here $\vect{\beta}_{jk}(t)$ is a column vector function of exit rates from states $(j,\tilde{j})$ in $\{(j,1),...,(j,d_j)\}$. Therefore it seems natural to pair $j$ and $\tilde{j}$, which explains the seemingly awkward indexation of elements of $\vect{\beta}_{jk}(t)$. Since a new microstate is picked independently of $(j,\tilde{j})$ from $\{ (k,1),...,(k,d_k)\}$, records of where the process transitioned from are lost upon transition, which explains the term reset property. \demoo
\end{remark}
Let $\mathbb{F}^Z = \{\mathcal{F}^Z(t)\}_{t\geq0}$ denote the natural filtration generated by the macrostate process $Z$. Since only $Z$ is observed, the filtration $\mathbb{F}^Z$ represents the available information. We may, as previously, associate to $Z$ a multivariate counting process $N$ with components $N_{jk} = \tuborg{N_{jk}(t)}_{t\geq 0}$ given by
\begin{align*}
N_{jk}(t)=\# \tuborg{s\in (0,t] : Z(s-)=j, Z(s)=k}
\end{align*}
as well as a marked point process $(T_n, Y_n)_{n=0}^\infty$ with $T_n$ the $n$'th jump time of $Z$ and $Y_n=Z(T_n)$; we use the convention $T_0 = 0$. Disregarding null-sets, the jump process $Z$, the multivariate counting process $N$, and the marked point process $(T_n,Y_n)_{n=0}^\infty$ generate the same information.

Although the microstate process $\vect{X}$ is Markovian, this is generally not the case for the macrostate process $Z$. In this paper, we derive distributional properties of $Z$ by deriving distributional properties of the multivariate counting process $N$ and the marked point process $(T_n, Y_n)_{n=0}^\infty$. We are especially interested in the special case where the reset property {holds}. Here it turns out that $(Z,U)$ becomes Markovian, where $U$ is the duration process defined in~\eqref{eq:U_def}.

\subsection{Payments}\label{subsec:payment_streams}

Having specified the probabilistic model, we now turn our attention to the insurance contract itself. Again, we denote by $B=\{B(t)\}_{t\geq0}$ the payments, benefits less premiums, between the insured and the insurer. We suppose that $B$ takes the form prescribed in~\eqref{eq:payments_duration}. Furthermore, throughout the paper, we assume a maximal contract time $\eta>0$ such that all sojourn payment rates and transition payments are zero after time $\eta$. 

In the later stages of the paper, we add another layer of complications by turning to so-called scaled payments that appear in connection with policyholder behaviour such as free-policy conversion and stochastic retirement. To be precise, here we furthermore consider payments $B^\rho = \{B^\rho(t)\}_{t\geq0}$ given by
\begin{align*}
\md B^\rho(t) = \rho\big(\tau,Z(\tau-),Z(\tau)\big)^{1_{(\tau \leq t)}} \md B(t), \quad B^\rho(0) = B(0),
\end{align*}
where $\tau$ is the exercise time of some policyholder option (modelled incidentally) and $0<\rho(t,j,k)\leq1$ is a suitable regular deterministic scaling factor.

The remainder of the paper now focuses on deriving distributional properties of the macrostate process $Z$, establishing computational schemes for relevant expected accumulated cash flows and prospective reserves, and finally relating these findings to existing models and methods in the life insurance literature.

\section{Properties of $Z$}\label{sec:propZ}

In this section, we derive some distributional properties of $Z$. In Subsection~\ref{subsec:general_prop}, we consider the general setup and derive the conditional finite-dimensional distributions of the marked point process $(T_n,Y_n)_{n\in \N_0}$ associated to $Z$ as well as the predictable compensators of the multivariate counting processes $N$ associated to $Z$. In Subsection~\ref{subsec:semi-Markov}, we impose the reset property, which we show, by applying the results of Subsection~\ref{subsec:general_prop}, leads to $(Z,U)$ being Markovian.

\subsection{General results}\label{subsec:general_prop}

Since $Z$ is generally not Markovian, we introduce
\begin{align*}
S_n = (T_0, Y_0, T_1, Y_1, \ldots, T_n, Y_n)
\end{align*}
to keep track of the history of $Z$. Write
\begin{align*}
s_n = (0,1,t_1,y_1,\ldots,t_n,y_n), \quad y_i  \in \mathcal{J}, 0 < t_1 < t_2 < \ldots < t_n < \infty,
\end{align*}
for a generic realization of $S_n$ whenever $T_n < \infty$. Let
\begin{align*}
\bar{F}^{(n+1)}(t \, | \, s_n) = \mathbb{P}(T_{n+1} > t \, | \, S_n = s_n)
\end{align*}
denote the conditional survival function of $T_{n+1}$ given $S_n$, and let
\begin{align*}
G^{(n+1)}(k \, | \, s_n, t_{n+1}) = \mathbb{P}(Y_{n+1} = k \, | \, S_n = s_n, T_{n+1} = t_{n+1})
\end{align*}
denote the conditional probability mass function of $Y_{n+1}$ given $(S_n, T_{n+1})$. These quantities determine the distribution of $Z$. The following result provides a characterization of them within our setup.
\begin{proposition}\label{thm:MPP}
The conditional finite-dimensional distributions of the marked point process $(T_n ,Y_n )_{n=1}^\infty$ are given by
\begin{align*}
&\bar{F}^{(n+1)}(t|s_n) =   \frac{\vect{\alpha}(s_n)}{\vect{\alpha}(s_n)\vect{1}_{d_{y_n}}} \Prodi_{t_n}^t \left(\mat{I}+\mat{M}_{y_n y_n}(x)\md x\right)\vect{1}_{d_{y_n}}, \quad t \geq t_n, \\
&G^{(n+1)}(k|s_{n}, t_{n+1}) = \frac{\vect{\alpha}(s_n)\displaystyle \Prodi_{t_n}^{t_{n+1}}\!\left(\mat{I}+\mat{M}_{y_n y_n}(x)\md x\right)\mat{M}_{y_n k}(t_{n+1})\vect{1}_{d_k}  }{\vect{\alpha}(s_n)\displaystyle \Prodi_{t_n}^{t_{n+1}}\!\left(\mat{I}+\mat{M}_{y_n y_n}(x)\md x\right)\mat{m}_{y_n}(t_{n+1})  }, \quad k \neq y_n,
\end{align*}
where the $d_{y_n}$-dimensional row vector $\vect{\alpha}(s_n)$ is given by
\begin{align*}
\vect{\alpha}(s_n) = \vect{\pi}_1(0)\displaystyle\prod_{\ell = 0}^{n-1}\,\Prodi_{t_{\ell}}^{t_{\ell+1}}\!\left(\mat{I}+\mat{M}_{y_\ell y_\ell}(x)\md x\right)\mat{M}_{y_\ell y_{\ell +1}}(t_{\ell+1}).
\end{align*}
\end{proposition}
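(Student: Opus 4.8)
The plan is to exploit that, although $Z$ is not Markovian, the full microstate process $\vect{X}$ is, and to track the conditional law of the hidden microstate $X_2(T_n)$ given the observed macrostate history $S_n$. Concretely, I would first show by induction on $n$ that $\vect{\alpha}(s_n)$ is, componentwise, a version of the joint sub-density
\[
\mathbb{P}(S_n \in \md s_n, \, X_2(T_n) = \widetilde{y}_n) = \alpha_{\widetilde{y}_n}(s_n)\,\md t_1 \cdots \md t_n,
\]
with $\widetilde{y}_n$ ranging over the microstates $\{1,\ldots,d_{y_n}\}$ of $y_n$. The base case $n=0$ is just the initial law $\vect{\pi}_1(0)$. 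For the induction step I would condition on $(S_n, X_2(T_n))$ and invoke the Markov property of $\vect{X}$: while $Z$ rests in macrostate $y_n$ on $[t_n, t_{n+1})$, the microstate evolves under the sub-intensity $\mat{M}_{y_n y_n}$, so remaining in $y_n$ while moving between its microstates is governed by $\Prodi_{t_n}^{t_{n+1}}(\mat{I} + \mat{M}_{y_n y_n}(x)\md x)$, and the instantaneous jump into macrostate $y_{n+1}$ at $t_{n+1}$ contributes the intensity factor $\mat{M}_{y_n y_{n+1}}(t_{n+1})$. Multiplying these and summing over the microstate occupied just before the jump reproduces precisely the recursion
\[
\vect{\alpha}(s_{n+1}) = \vect{\alpha}(s_n)\Prodi_{t_n}^{t_{n+1}}\!\left(\mat{I} + \mat{M}_{y_n y_n}(x)\md x\right)\mat{M}_{y_n y_{n+1}}(t_{n+1}),
\]
which unfolds to the stated product over $\ell$.

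With this sub-density in hand, the two conditional quantities follow by forming ratios. Summing over $\widetilde{y}_n$ gives the marginal $\mathbb{P}(S_n \in \md s_n) = \vect{\alpha}(s_n)\vect{1}_{d_{y_n}}\,\md t_1\cdots \md t_n$, so $\vect{\alpha}(s_n)/(\vect{\alpha}(s_n)\vect{1}_{d_{y_n}})$ is the normalized conditional law of $X_2(T_n)$ given $S_n = s_n$. Conditionally on this microstate and on $S_n$, the event $\{T_{n+1} > t\}$ coincides with $\vect{X}$ not having left the microstates of $y_n$ by time $t$, whose probability is the corresponding entry of $\Prodi_{t_n}^{t}(\mat{I} + \mat{M}_{y_n y_n}(x)\md x)\vect{1}_{d_{y_n}}$; averaging over the microstate with the normalized weights yields the claimed formula for $\bar{F}^{(n+1)}(t\,|\,s_n)$. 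This is exactly the inhomogeneous phase-type overshoot identity~\eqref{eq:cond_IPH}, now applied to the block $\mat{M}_{y_n y_n}$ with initial vector $\vect{\alpha}(s_n)$.

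For $G^{(n+1)}$ I would compute the joint density of $(T_{n+1}, Y_{n+1})$ at $(t_{n+1}, k)$ given $S_n = s_n$: staying in $y_n$ up to $t_{n+1}$ and then jumping into some microstate of $k$ has density proportional to $\vect{\alpha}(s_n)\Prodi_{t_n}^{t_{n+1}}(\mat{I} + \mat{M}_{y_n y_n}(x)\md x)\mat{M}_{y_n k}(t_{n+1})\vect{1}_{d_k}$, where the right multiplication by $\vect{1}_{d_k}$ sums over the entering microstate. Dividing by the marginal density of $T_{n+1}$ at $t_{n+1}$---obtained by summing the numerator over all $k \neq y_n$, which by~\eqref{eq:exit_rates} collapses $\sum_{k \neq y_n}\mat{M}_{y_n k}(t_{n+1})\vect{1}_{d_k}$ into the exit-rate vector $\vect{m}_{y_n}(t_{n+1})$---delivers the stated expression for $G^{(n+1)}(k\,|\,s_n, t_{n+1})$. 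The step demanding the most care is the rigorous treatment of conditioning on the continuous jump times: the cleanest route is to phrase everything through the joint sub-density of the marked point process with respect to its natural reference measure (Lebesgue in the $t_i$ and counting in the $y_i$), so that all three displayed formulas emerge as bona fide Radon--Nikodym derivatives rather than as informal infinitesimal ratios.
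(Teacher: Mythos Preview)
Your proposal is correct and proceeds along essentially the same lines as the paper: both arguments hinge on an inductive identification of $\vect{\alpha}(s_n)$ via the Markov property of $\vect{X}$, after which $\bar{F}^{(n+1)}$ and $G^{(n+1)}$ drop out as ratios. The difference is one of packaging. You interpret $\vect{\alpha}(s_n)$ directly as the joint sub-density of $(S_n, X_2(T_n))$ with respect to the natural reference measure and only normalize at the end; the paper instead first proves an auxiliary lemma giving, to first order in $h$, the conditional probability
\[
\mathbb{P}\big(t < T_{n+1} \leq t+h,\ \vect{X}(T_{n+1}) = \mat{\kay} \,\big|\, S_n = s_n\big) = \frac{\vect{\alpha}(s_n,t,k)\vect{e}_{\widetilde{k}}}{\vect{\alpha}(s_n)\vect{1}_{d_{y_n}}}\, h + o(h),
\]
and then reads off the density $f^{(n+1)}(t,k\mid s_n)$ from this. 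Your filtering formulation is conceptually clean and makes the role of $\vect{\alpha}(s_n)$ as an unnormalized filter for the hidden microstate transparent; the paper's $o(h)$ calculation is more elementary in that it works entirely with conditional probabilities given $S_n = s_n$ and avoids having to set up densities with respect to the reference measure of the marked point process, which is precisely the step you yourself flag as requiring the most care.
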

\begin{proof}
Please refer to Appendix~\ref{ap:A}.
\end{proof}
\begin{remark}\label{rmk:iph-link}
The first statement of Proposition~\ref{thm:MPP} corresponds to
\begin{align*}
T _{n+1} - T _n \, \big|\, S_n  \sim \mathrm{IPH}\left(\frac{\vect{\alpha}(S_n )}{\vect{\alpha}(S _n)\vect{1}_{d_{Y_n}}},\, \vect{M}_{Y _nY _n}(T _n+\bigcdot)\right)\!{,}
\end{align*}
{confer with~\eqref{eq:dist_prodint}.\demoo}
\end{remark}
The compensators of the multivariate counting process associated to $Z$, which also determine the distribution of $Z$, are key quantities in the context of estimation and valuation. In our setup, they take the following form.
\begin{theorem}\label{thm:compensator}
The counting process $N_{jk}$ has $(\mathbb{F}^Z,\mathbb{P})$-compensator given by $\mathrm{d}\Lambda_{jk}(t) = \lambda_{jk}(t) \, \mathrm{d}t$ with
\begin{align*}
\lambda_{jk} (t) =  \sum_{n\in \N_0}1_{\left(T_{n} ,\, T_{n+1} \right]}(t)1_{\left(Y _{n} = j\right)}\frac{\vect{\alpha}\!\left(S_{n-1} , T _n, j\right)\displaystyle \Prodi_{T _{n}}^t\!\left(\mat{I}+\mat{M}_{jj}(x)\md x\right)}{\vect{\alpha}\!\left(S_{n-1} , T _n, j\right)\displaystyle \Prodi_{T _{n}}^t\!\left(\mat{I}+\mat{M}_{jj}(x)\md x\right)\!\vect{1}_{d_j}  }\mat{M}_{jk}(t)\vect{1}_{d_k}.
\end{align*}
\end{theorem}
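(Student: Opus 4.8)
The plan is to reduce the statement to the general theory of compensators for marked point processes and then to substitute the explicit conditional distributions already obtained in Proposition~\ref{thm:MPP}. Disregarding null-sets, $\mathbb{F}^Z$ coincides with the internal filtration of the marked point process $(T_n,Y_n)_{n\geq0}$, and for such a process the $(\mathbb{F}^Z,\mathbb{P})$-compensator of the counting process recording jumps into $k$ is known to be, on each stochastic interval $(T_n,T_{n+1}]$, the hazard rate of the next jump time $T_{n+1}$ multiplied by the conditional probability that the next mark $Y_{n+1}$ equals $k$. Summing these contributions over $n$ and inserting the indicator $1_{(Y_n=j)}$ that the process currently sits in macrostate $j$ yields the asserted decomposition $\lambda_{jk}(t)=\sum_{n\in\N_0}1_{(T_n,T_{n+1}]}(t)\,1_{(Y_n=j)}(\cdots)$. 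The first task is therefore to extract from Proposition~\ref{thm:MPP} both the hazard rate and the conditional jump probability in closed form.

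For the hazard rate, I would differentiate the conditional survival function $\bar{F}^{(n+1)}(t\,|\,s_n)$ from Proposition~\ref{thm:MPP} in $t$. Writing the shorthand $\mat{F}(t_n,t)=\Prodi_{t_n}^t(\mat{I}+\mat{M}_{y_ny_n}(x)\md x)$ and applying Kolmogorov's forward equation $\tfrac{\partial}{\partial t}\mat{F}(t_n,t)=\mat{F}(t_n,t)\mat{M}_{y_ny_n}(t)$ together with the identity $\mat{M}_{jj}(t)\vect{1}_{d_j}=-\vect{m}_j(t)$ from~\eqref{eq:exit_rates}, one finds that $-\tfrac{\partial}{\partial t}\bar{F}^{(n+1)}(t\,|\,s_n)$ is proportional to $\vect{\alpha}(s_n)\mat{F}(t_n,t)\vect{m}_{y_n}(t)$. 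Dividing by $\bar{F}^{(n+1)}(t\,|\,s_n)$, the normalising constant $\vect{\alpha}(s_n)\vect{1}_{d_{y_n}}$ cancels and the hazard rate reduces to $\vect{\alpha}(s_n)\mat{F}(t_n,t)\vect{m}_{y_n}(t)\big/\vect{\alpha}(s_n)\mat{F}(t_n,t)\vect{1}_{d_{y_n}}$. Multiplying this by $G^{(n+1)}(k\,|\,s_n,t)$ from Proposition~\ref{thm:MPP}, the factor $\vect{\alpha}(s_n)\mat{F}(t_n,t)\vect{m}_{y_n}(t)$ appearing in the denominator of the conditional jump probability cancels against the numerator of the hazard rate, leaving exactly $\vect{\alpha}(s_n)\mat{F}(t_n,t)\mat{M}_{y_nk}(t)\vect{1}_{d_k}\big/\vect{\alpha}(s_n)\mat{F}(t_n,t)\vect{1}_{d_{y_n}}$. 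Setting $y_n=j$ and $s_n=(S_{n-1},T_n,j)$ reproduces the stated formula.

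The main obstacle is not the algebra, which is a short cancellation, but the rigorous passage from the conditional finite-dimensional distributions to the predictable compensator. This requires invoking the representation of the compensator of a marked point process in terms of its regular conditional distributions relative to the internal history, and verifying the regularity needed for the $\md t$ form: the survival function $\bar{F}^{(n+1)}(t\,|\,s_n)$ must be absolutely continuous in $t$, which follows from the assumed smoothness of $\mat{M}$ and the corresponding differentiability of the product integral, and the resulting intensity must be locally integrable up to the maximal contract time $\eta$. One should also confirm non-explosion of $Z$, so that the sum over $n$ is almost surely locally finite and the piecing-together across the intervals $(T_n,T_{n+1}]$ is legitimate. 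These measure-theoretic points are where care is needed; once they are in place, the explicit intensity follows directly from the computation above.
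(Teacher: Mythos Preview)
Your proposal is correct and follows essentially the same approach as the paper: invoke the general compensator representation for marked point processes (the paper cites~\cite[Proposition~4.4.1(b)(ii)]{jacobsen2006} for the formula $\lambda_{jk}(t)=\sum_n 1_{(T_n,T_{n+1}]}(t)1_{(Y_n=j)}\tfrac{f^{(n+1)}}{\bar{F}^{(n+1)}}G^{(n+1)}$), then substitute the expressions from Proposition~\ref{thm:MPP} and observe the cancellation of $\vect{\alpha}(s_n)\mat{F}(t_n,t)\vect{m}_{y_n}(t)$. The paper records the density $f^{(n+1)}$ directly from Lemma~\ref{lemma:help} rather than differentiating $\bar{F}^{(n+1)}$ via Kolmogorov's forward equation, but this is the same computation.
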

\begin{proof}
Please refer to Appendix~\ref{ap:A}.
\end{proof}

\subsection{Reset property and semi-Markovianity}\label{subsec:semi-Markov}

Suppose now that the reset property holds, that is $\mat{M}_{jk}(t)$ satisfies~\eqref{eq:indep_cond} for all $j \neq k$. We now make the following observations. From~\eqref{eq:indep_cond}, we see that for $k \neq j$,
\begin{align*}
\mat{\pi}_j(t)\Prodi_t^s\!\left(\mat{I}+\mat{M}_{jj}(x)\md x\right)\mat{\beta}_{jk}(s)
\end{align*}
is a $1\times 1$-dimensional matrix, implying it cancels if appearing in both the numerator and denominator of a fraction. In particular,
\begin{align}\label{eq:reset_fraction}
\frac{\vect{\alpha}(s_n)}{\vect{\alpha}(s_n)\vect{1}_{d_{y_n}}} = \frac{\mat{\pi}_{y_n}(t_n)}{\mat{\pi}_{y_n}(t_n)\vect{1}_{d_{y_n}}} = \mat{\pi}_{y_n}(t_n).
\end{align} 
Combined with the results of Subsection~\ref{subsec:general_prop}, this yields the following corollaries.
\begin{corollary}\label{cor:MPP_reset}
Assume that {the reset property} holds. Then the conditional finite-dimensional distributions of the marked point process $(T_n ,Y_n )_{n=1}^\infty$ are given by
\begin{align*}
&\bar{F}^{(n+1)}(t|s_n) =   \vect{\pi}_{y_n}(t_n) \Prodi_{t_n}^t \left(\mat{I}+\mat{M}_{y_n y_n}(x)\md x\right)\vect{1}_{d_{y_n}}, \quad t \geq t_n, \\
&G^{(n+1)}(k|s_{n}, t_{n+1})  = \frac{\vect{\pi}_{y_n}(t_n)\displaystyle \Prodi_{t_n}^{t_{n+1}}\!\left(\mat{I}+\mat{M}_{y_n y_n}(x)\md x\right)\mat{\beta}_{y_n y_{n+1}}(t_{n+1})   }{\vect{\pi}_{y_n}(t_n)\displaystyle \Prodi_{t_n}^{t_{n+1}}\!\left(\mat{I}+\mat{M}_{y_n y_n}(x)\md x\right)\mat{m}_{y_n}(t_{n+1})  }, \quad k \neq y_n,
\end{align*}
where $\vect{m}_{y_n}(t_{n+1})$ is obtained from~\eqref{eq:exit_rates_indep}. 
\end{corollary}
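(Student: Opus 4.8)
The plan is to obtain both identities as a direct specialization of Proposition~\ref{thm:MPP}: since that proposition already expresses $\bar{F}^{(n+1)}$ and $G^{(n+1)}$ in closed form through the row vector $\vect{\alpha}(s_n)$, the entire task reduces to simplifying $\vect{\alpha}(s_n)$ and the transition block $\mat{M}_{y_n k}(t_{n+1})$ under the rank-one structure~\eqref{eq:indep_cond}. The engine of the simplification is the scalar-cancellation identity~\eqref{eq:reset_fraction}, so I would first secure that identity and then substitute.

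The crucial step is therefore to verify that the normalized version of $\vect{\alpha}(s_n)$ collapses to $\vect{\pi}_{y_n}(t_n)$. First I would isolate the final factor in the defining product for $\vect{\alpha}(s_n)$ and insert the reset form $\mat{M}_{y_{n-1} y_n}(t_n) = \vect{\beta}_{y_{n-1} y_n}(t_n)\vect{\pi}_{y_n}(t_n)$. Everything to the left of $\vect{\pi}_{y_n}(t_n)$ -- the initial row vector $\vect{\pi}_1(0)$, the intervening product integrals, and the column vector $\vect{\beta}_{y_{n-1} y_n}(t_n)$ -- then contracts to a single nonnegative scalar, so that $\vect{\alpha}(s_n)$ is a scalar multiple of the probability row vector $\vect{\pi}_{y_n}(t_n)$. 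Dividing by $\vect{\alpha}(s_n)\vect{1}_{d_{y_n}}$ cancels this scalar, and since $\vect{\pi}_{y_n}(t_n)\vect{1}_{d_{y_n}} = 1$ the normalized vector equals $\vect{\pi}_{y_n}(t_n)$ exactly, which is precisely~\eqref{eq:reset_fraction}.

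With~\eqref{eq:reset_fraction} in hand, the first identity follows by replacing the leading normalized $\vect{\alpha}$-factor in $\bar{F}^{(n+1)}$ with $\vect{\pi}_{y_n}(t_n)$. For the second identity I would note that the scalar $\vect{\alpha}(s_n)\vect{1}_{d_{y_n}}$ factors out of both the numerator and the denominator of $G^{(n+1)}$ and hence cancels, reducing $\vect{\alpha}(s_n)$ to its normalization $\vect{\pi}_{y_n}(t_n)$; it then remains to rewrite the transition term, where inserting~\eqref{eq:indep_cond} gives $\mat{M}_{y_n k}(t_{n+1})\vect{1}_{d_k} = \vect{\beta}_{y_n k}(t_{n+1})\vect{\pi}_k(t_{n+1})\vect{1}_{d_k} = \vect{\beta}_{y_n k}(t_{n+1})$ by $\vect{\pi}_k(t_{n+1})\vect{1}_{d_k} = 1$, matching the stated numerator. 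The denominator carries only the same substitution for $\vect{\alpha}$ together with $\vect{m}_{y_n}(t_{n+1})$ read off from~\eqref{eq:exit_rates_indep}. I expect no genuine obstacle here, as the result is a specialization rather than a new theorem; the only point demanding care is the proportionality argument for $\vect{\alpha}(s_n)$ above, where one must track dimensions to confirm that exactly the trailing factor $\vect{\pi}_{y_n}(t_n)$ survives while everything preceding it collapses to a scalar.
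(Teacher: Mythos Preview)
Your proposal is correct and matches the paper's own argument: the corollary is stated immediately after~\eqref{eq:reset_fraction} with the remark that it follows by combining that identity with Proposition~\ref{thm:MPP}, which is precisely the specialization you carry out. Your additional observation that $\mat{M}_{y_n k}(t_{n+1})\vect{1}_{d_k} = \vect{\beta}_{y_n k}(t_{n+1})$ via $\vect{\pi}_k(t_{n+1})\vect{1}_{d_k}=1$ makes explicit the one step the paper leaves implicit.
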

\begin{remark}
The first statement of Corollary~\ref{cor:MPP_reset} corresponds to
\begin{align*}
T _{n+1}-T_n \, \big|\, S _n \sim \mathrm{IPH}\big(\vect{\pi}_{Y _n} \!\left(T _n\right)\!,\, \vect{M}_{Y _nY _n}(T _n+\bigcdot)\big). \demooeq
\end{align*}
\end{remark}
\begin{corollary}\label{cor:semi-Markov}
Assume that {the reset property} holds. Then the counting process $N_{jk}$ has $(\mathbb{F}^Z,\mathbb{P})$-compensator given by $\mathrm{d}\Lambda_{jk}(t) = \lambda_{jk}(t) \, \mathrm{d}t$ with 
\begin{align*}
\lambda_{jk} (t) &=  \sum_{n\in \N_0}1_{\left(T_{n} ,\, T_{n+1} \right]}(t)1_{\left(Y _{n} = j\right)}\frac{\vect{\pi}_j\!\left(T_{n} \right)\displaystyle \Prodi_{T _{n}}^t\!\left(\mat{I}+\mat{M}_{jj}(x)\md x\right)}{ \vect{\pi}_j\!\left(T_{n} \right)\displaystyle \Prodi_{T _{n}}^t\!\left(\mat{I}+\mat{M}_{jj}(x)\md x\right)\!\vect{1}_{d_j}  }\vect{\beta}_{jk}(t).
\end{align*}
\end{corollary}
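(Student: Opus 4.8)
The plan is to derive Corollary~\ref{cor:semi-Markov} directly from the general compensator formula of Theorem~\ref{thm:compensator}, feeding in the rank-one structure~\eqref{eq:indep_cond}. The off-diagonal blocks enter that formula in exactly two places: the normalized initial-vector factor built from $\vect{\alpha}(S_{n-1},T_n,j)$, and the trailing exit factor $\mat{M}_{jk}(t)\vect{1}_{d_k}$. The entire argument therefore reduces to simplifying these two objects under the reset property and substituting back.

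First I would treat the exit factor. Since~\eqref{eq:indep_cond} gives $\mat{M}_{jk}(t) = \vect{\beta}_{jk}(t)\vect{\pi}_k(t)$ and $\vect{\pi}_k(t)$ is a probability row vector with $\vect{\pi}_k(t)\vect{1}_{d_k} = 1$, one immediately obtains $\mat{M}_{jk}(t)\vect{1}_{d_k} = \vect{\beta}_{jk}(t)\big(\vect{\pi}_k(t)\vect{1}_{d_k}\big) = \vect{\beta}_{jk}(t)$. This replaces the matrix-valued exit term by the jump-rate vector $\vect{\beta}_{jk}(t)$, matching the trailing factor in the asserted expression and consistent with~\eqref{eq:exit_rates_indep}.

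Next I would handle the normalized initial vector. The key observation, already recorded in~\eqref{eq:reset_fraction}, is that under the reset property $\vect{\alpha}(s_n)$ is a scalar multiple of $\vect{\pi}_{y_n}(t_n)$. Writing $\vect{\alpha}(S_{n-1},T_n,j) = a_n\,\vect{\pi}_j(T_n)$ with the positive scalar $a_n = \vect{\alpha}(S_{n-1},T_n,j)\vect{1}_{d_j}$, the factor $a_n$ appears identically in the numerator and the denominator of the compensator fraction and therefore cancels, leaving $\vect{\pi}_j(T_n)$ in place of the normalized $\vect{\alpha}$. I would stress that this cancellation is unaffected by the product integral $\Prodi_{T_n}^t(\mat{I}+\mat{M}_{jj}(x)\md x)$ sitting between the initial vector and the normalizing $\vect{1}_{d_j}$: since $a_n$ is a genuine scalar it factors out of both the numerator and the denominator before that product integral acts. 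Substituting both simplifications into Theorem~\ref{thm:compensator} then yields the claimed formula.

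I expect no genuine difficulty; the corollary is essentially a bookkeeping consequence of Theorem~\ref{thm:compensator} together with~\eqref{eq:reset_fraction}. The one point that warrants care, and the nearest thing to an obstacle, is the scalar-cancellation step for $\vect{\alpha}$: one must be sure that the rank-one factorization propagates through the product $\prod_{\ell=0}^{n-1}$ defining $\vect{\alpha}(s_n)$, so that $\vect{\alpha}(S_{n-1},T_n,j)$ really is a scalar times $\vect{\pi}_j(T_n)$ rather than merely normalizing to it. This hinges on each block $\mat{M}_{y_\ell y_{\ell+1}}(t_{\ell+1})$ terminating in the row vector $\vect{\pi}_{y_{\ell+1}}(t_{\ell+1})$, which collapses everything to its left into a scalar weight; this is precisely the content encoded by~\eqref{eq:reset_fraction}, so I would simply invoke it and record the resulting $\vect{\pi}_j(T_n)$.
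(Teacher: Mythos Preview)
Your proposal is correct and follows essentially the same route as the paper: the corollary is obtained directly from Theorem~\ref{thm:compensator} by invoking the reset-property simplification~\eqref{eq:reset_fraction} for the normalized $\vect{\alpha}$-factor and the identity $\mat{M}_{jk}(t)\vect{1}_{d_k}=\vect{\beta}_{jk}(t)$ for the exit term. The paper treats this as an immediate consequence and gives no further argument, so your level of detail already exceeds what is provided there.
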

The results show that the general path dependence of $Z$ through $\vect{\alpha}(S_n)$ is significantly reduced whenever {the reset property} is imposed. We may actually write
\begin{align*}
\lambda_{jk} (t) = 1_{(Z(t-) = j)}\frac{\vect{\pi}_j\!\left(t-U(t-) \right)\displaystyle \Prodi_{t-U(t-)}^t\!\left(\mat{I}+\mat{M}_{jj}(x)\md x\right)}{ \vect{\pi}_j\!\left(t-U(t-) \right)\displaystyle \Prodi_{t-U(t-)}^{{t}}\!\left(\mat{I}+\mat{M}_{jj}(x)\md x\right)\!\vect{1}_{d_j}  }\vect{\beta}_{jk}(t),
\end{align*}
which shows that the macrostate process $Z$ is a time-inhomogeneous semi-Markov process with transition rates $\nu_{jk}(t,u)$, $j \neq k$, which are functions of both time and duration, given by
\begin{align}\label{eq:int_reset}
\nu_{jk}(t,u)
=
\frac{\vect{\pi}_j(t-u)\displaystyle \Prodi_{t-u}^t\!\left(\mat{I}+\mat{M}_{jj}(x)\md x\right)}{ \vect{\pi}_j(t-u)\displaystyle \Prodi_{t-u}^t\!\left(\mat{I}+\mat{M}_{jj}(x)\md x\right)\!\vect{1}_{d_j}  }\vect{\beta}_{jk}(t).
\end{align}
Conversely, the class of aggregate Markov models is quite flexible. In light also of the many prized denseness results for phase type distributions, we therefore {suggest the following conjecture:
\begin{conjecture}\label{conj:dense}
The class of aggregate Markov models with the reset property is dense in the class of time-inhomogeneous semi-Markovian models with absolutely continuous sojourn time distributions.
\end{conjecture}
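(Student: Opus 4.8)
The plan is to combine three ingredients: (i) the structural identity~\eqref{eq:int_reset}, which shows that a reset-type aggregate Markov model realizes a semi-Markov model whose sojourn-and-mark law in each macrostate is a \emph{marked inhomogeneous phase-type} distribution; (ii) the denseness of inhomogeneous phase-type distributions in the class of all distributions on $[0,\infty)$, confer with~\cite{Albrecher-Bladt-2019,bladt2017matrix}; and (iii) a gluing argument that propagates one-sojourn approximations to the law of the whole path. Throughout I would fix the finite horizon $[0,\eta]$ and aim for weak convergence of the induced laws of $Z$ on the Skorokhod space $D([0,\eta],\mathcal{J})$; for the pure-jump processes at hand this is equivalent to convergence of the successive sojourn-and-mark kernels appearing in Corollary~\ref{cor:MPP_reset}.

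First I would recast the target. A time-inhomogeneous semi-Markov model with absolutely continuous sojourn times is determined, for each macrostate $j$ and each entry time $s$, by the defective sub-densities
\begin{align*}
q_{jk}(s,u) = \nu_{jk}(s+u,u)\exp\!\bigg(\!-\!\int_0^u \sum_{\ell \neq j}\nu_{j\ell}(s+v,v)\,\md v\bigg), \quad k \neq j,
\end{align*}
being the joint density of the sojourn time $u$ and the exit mark $k$. The matching quantity in a reset aggregate model, read off from Corollary~\ref{cor:MPP_reset}, is $\vect{\pi}_j(s)\Prodi_s^{s+u}(\mat{I}+\mat{M}_{jj}(x)\md x)\vect{\beta}_{jk}(s+u)$. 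The single-macrostate problem is therefore to approximate, uniformly in the entry time $s\in[0,\eta]$, a finite family $\{q_{jk}(s,\cdot)\}_{k\neq j}$ of cause-specific sub-densities by such marked phase-type densities.

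The second step is the single-state approximation. For the marks I would use a splitting device: enlarge the microstate space of macrostate $j$ into blocks indexed by the target state $k$ and choose each exit vector $\vect{\beta}_{jk}(\cdot)$ to be supported on its own block, so that the exit cause is read off deterministically from the microstate at the jump epoch and the competing-risks structure is encoded in the within-macrostate dynamics $\mat{M}_{jj}$. For the duration dependence I would invoke the denseness of inhomogeneous phase-type distributions to approximate each cause-specific sub-density. In the time-homogeneous case this reduces exactly to classical phase-type denseness, since with $\mat{M}_{jj}$ and $\vect{\pi}_j$ constant the induced hazard $\vect{\pi}_j e^{\mat{M}_{jj}u}\vect{m}_j / \vect{\pi}_j e^{\mat{M}_{jj}u}\vect{1}_{d_j}$ is precisely that of a phase-type sojourn, confer with Example~\ref{ex:singe_phase} for the degenerate single-phase case.

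The decisive and hardest step is the joint control of the two time scales, calendar time and duration, forced by genuine time-inhomogeneity. In a reset model the within-state intensities $\mat{M}_{jj}(x)$ are functions of calendar time alone, yet the induced duration-dependent hazard at entry time $s$ is generated by the product integral over the \emph{calendar} window $[s,s+u]$, so a single matrix function $\mat{M}_{jj}(\cdot)$ must reproduce the whole family $\{q_{jk}(s,\cdot)\}_s$ simultaneously. I would attempt this by embedding a calendar-time discretization into the phase space: partition $[0,\eta]$ into fine intervals, assign to each a block of microstates activated through the entry law $\vect{\pi}_j(s)$, and let $\mat{M}_{jj}$ be block-wise homogeneous so that on each interval the sojourn behaves like a homogeneous phase-type distribution while the inhomogeneous product integral interpolates across blocks. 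Controlling this interpolation error and, above all, making the approximation uniform in the entry time $s$ over the continuum $[0,\eta]$ is where I expect the real work to lie. A concluding gluing step would then propagate the per-state, per-sojourn weak approximations through the recursive kernel composition of Corollary~\ref{cor:MPP_reset}; this is comparatively routine once the one-sojourn kernels converge weakly and uniformly on compacts in $s$, by dominated convergence together with the almost-sure boundedness of the number of jumps on $[0,\eta]$.
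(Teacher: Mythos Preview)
The paper does not prove this statement. It is explicitly labelled a conjecture, and immediately after stating it the authors write that ``the clarification of this conjecture is outside the scope of this paper, but it nevertheless constitutes an interesting research direction.'' There is therefore no paper proof against which to compare your attempt.

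Assessing your proposal on its own merits: the overall architecture is reasonable, and you have correctly located the obstruction. The mark-splitting device is standard and sound, and the final gluing step through the kernel recursion of Corollary~\ref{cor:MPP_reset} is indeed routine once the per-sojourn approximations are in place. But the decisive step you flag in your third paragraph is not resolved, only described. In a reset aggregate model the sub-intensity $\mat{M}_{jj}(\cdot)$ depends on calendar time alone, so for every entry time $s$ the sojourn law is driven by the \emph{same} matrix function restricted to $[s,s+u]$; the only entry-time-specific freedom sits in $\vect{\pi}_j(s)$. Your block construction uses $\vect{\pi}_j(s)$ to route into a calendar-indexed block, but once inside, the evolution over $[s,s+u]$ still traverses blocks shared by all later entry times, and you give no argument for why this shared structure can simultaneously realize an arbitrary two-variable family $\{q_{jk}(s,\cdot)\}_{s}$. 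Classical IPH denseness gives you one sojourn law at a time; what is needed here is a \emph{coherent} family indexed by $s$, and that is a genuinely stronger statement. Your own sentence ``this is where I expect the real work to lie'' is accurate: the proposal is a plausible programme, not a proof, and the gap is exactly the one that keeps the statement a conjecture.
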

In other words, we conjecture that} any (smooth) semi-Markovian model can be approximated arbitrarily well by an aggregate Markov model with the reset property simply by letting the number of microstates increase to infinity. The clarification of this conjecture is outside the scope of this paper, but it nevertheless constitutes an interesting research direction.

\section{Valuation}\label{sec:reserves_cf}
In this section, we consider the valuation of the life insurance liabilities corresponding to the payment process $B$. In Subsection~\ref{subsec:valuation}, we provide expressions for the expected accumulated cash flows and, hereby, the prospective reserves. In particular, we provide matrix representations that are useful in implementing the models. The expected accumulated cash flows are composed of conditional occupation probabilities, for which we derive formulas in Subsection~\ref{subsec:probs}. Special emphasize is given to the semi-Markovian case of Subsection~\ref{subsec:semi-Markov}. Finally, in Subsection~\ref{subsec:dur_indep} and Subsection~\ref{subsec:extensions}, we investigate the impact of duration-independent payments and the inclusion of policyholder options, respectively.

Throughout the section, the time value of money is described by a deterministic and suitably regular interest rate $r(t)$. As long as financial and insurance risks are assumed independent, the extension to stochastic interest rates is straightforward.

\subsection{General results}\label{subsec:valuation}

The expected accumulated cash flow $A(t,s)$ valued at time $t$ is given by,
\begin{align*}
A(t,s) = \mathbb{E}\big[ B(s) - B(t) \, \big| \, \mathcal{F}^Z(t) \big], \quad s \geq t,
\end{align*}
confer with Definition 2.2 in \cite{buchardtfurrersteffensen2019}, so that the prospective reserve reads
\begin{align}\label{eq:reserve}
V(t)
=
\mathbb{E}\bigg[\int_t^{\eta} e^{-\int_t^s r(v) \, \mathrm{d}v} \, \mathrm{d}B(s) \, \bigg| \, \mathcal{F}^Z(t)\bigg]
=
\int_t^\eta e^{-\int_t^s r(v) \, \mathrm{d}v} A(t,\mathrm{d}s).
\end{align}
If the payments are on the form~\eqref{eq:payments_duration}, then
\begin{align*}
A(t,\mathrm{d}s)
=
\sum_{\mat{\jay} \in E} \int_0^{U(t)+s-t} p_{\mat{\jay}}(t,s,\mathrm{d}z) \bigg( b_j(s,z) +  \sum_{\mat{\kay} \in E \atop k \neq j}  b_{jk}(s,z) \mu_{\mat{\jay}\mat{\kay}}(s) \bigg) \mathrm{d}s,
\end{align*}
where the conditional occupation probabilities are given by
\begin{align}\label{eq:p_general}
p_{\mat{\jay}}(t,s,z) = \mathbb{P}\big(\vect{X}(s) = \mat{\jay}, U(s) \leq z \, \big| \, \mathcal{F}^Z(t)\big).
\end{align}
If the reset {property is} also satisfied, then $Z$ is a time-inhomogeneous semi-Markovian process and thus $A(t,s) = \sum_{i \in \mathcal{J}} 1_{(Z(t) = i)} A_{i,U(t)}(t,s)$, where
\begin{align*}
A_{i,u}(t,s)
=
\mathbb{E}\big[ B(s) - B(t) \, \big| \, Z(t) =i, U(t) = u \big].
\end{align*}
Furthermore, it holds that
\begin{align*}
A_{i,u}(t,\mathrm{d}s)
=
\sum_{\mat{\jay} \in E} \int_0^{u+s-t} p_{i\mat{\jay}}(t,u,s,\mathrm{d}z) \bigg( b_j(s,z) + \sum_{k \in \mathcal{J} \atop k \neq j} b_{jk}(s,z) \beta_{\mat{\jay}k}(s) \bigg) \mathrm{d}s,
\end{align*}
where the transition probabilities are given by
\begin{align}\label{eq:semi-markov_probs}
p_{i\mat{\jay}}(t,u,s,z) = \mathbb{P}\big(\vect{X}(s) = \mat{\jay}, U(s) \leq z \, \big| \, Z(t) = i, U(t) = u).
\end{align}
For implementation purposes, it may be beneficial to use matrix representations of the expected accumulated cash flow $A(t,s)$ following along the lines of~\cite{Bladt2020}, since it allows for more compact and direct computations.

In the general case where the aforementioned reset property is not satisfied, the process $Z$ is non-Markovian, so it is not sensible to form a transition probability matrix function in the usual way. Instead, we form a $\bar{d}$-dimensional vector function according to
\begin{align}\label{eq:p_matrix_general}
\mat{p}(t,s,\mathrm{d}z) = \left\{p_{\mat{\jay}}(t,s,\mathrm{d}z)\right\}_{\mat{\jay}\in E}.
\end{align}
In regards to payments and transition rates, however, the fact that $\vect{X}$ is assumed to be Markovian allows us to follow more closely the approach of~\cite{Bladt2020}. In the present setup, we have a set of sojourn payment rates and transition payments that are all identical across microstates (of the same macrostate). Hence, the $\bar{d}$-dimensional vector of sojourn payment rates on the micro level is given by
\begin{align}\label{eq:b}
\vect{b}(t,u) = (\vect{b}_{1}(t,u),...,\vect{b}_{J}(t,u) ), 
\end{align}
where $\vect{b}_{j}(t,u) =b_{j}(t,u) \vect{1}_{d_j}$. The matrices of transition payments must, in a similar fashion, be identical across microstates (of the same macrostate), so that the transition payment matrix function on a micro level is given by
\begin{align}\label{eq:B}
  \mat{B}(t,u) &=
\begin{pmatrix}
\mat{B}_{11}(t,u) & \mat{B}_{12}(t,u) & \cdots & \mat{B}_{1J}(t,u) \\
 \mat{B}_{21}(t,u) & \mat{B}_{22}(t,u) & \cdots & \mat{B}_{2J}(t,u) \\
 \vdots & \vdots & \ddots & \vdots \\
 \mat{B}_{J1}(t,u) & \mat{B}_{J2}(t,u) & \cdots & \mat{B}_{JJ}(t,u) \\
\end{pmatrix}\!, 
\end{align}
where $\mat{B}_{ij}(t,u)$, $i,j\in \J$, $j\neq i$, is a $d_i\times d_j$-dimensional matrix with $b_{ij}(t,u)$ in all entries, and $\mat{B}_{ii}(t,u) = \mat{0}$ is a $d_i\times d_i$-dimensional matrix of zeroes. Based hereon, we define the reward matrix function as
\begin{align}\label{eq:def_reward}
\mat{R}(t,u) = \mat{\Delta}(\vect{b}(t,u)) +  \mat{M}(t)\bullet \mat{B}(t,u),
\end{align}
where $\bullet$ denotes the Schur product, that is $(\mat{A} \bullet \mat{B})_{ij} = A_{ij}B_{ij}$, and $\mat{\Delta}(\vect{b})$ is a diagonal matrix with the vector $\vect{b}$ as diagonal. This is similar to equations (3.8)--(3.11) in~\cite{Bladt2020}. 

The expected accumulated cash flow $A(t,s)$ may then be seen to have the following matrix representation: 
\begin{align}\label{eq:Matrix_form}
A(t,\mathrm{d} s) &= \int_0^{U(t)+s-t} \mat{p}(t,s,\mathrm{d} z)\mat{R}(s,z)\vect{1}_{\bar{d}} \, \mathrm{d} s,
\end{align}
where the original sums over the state space $E$ are reduced to matrix multiplications. 
 
In the case of the reset {property,} the semi-Markovianity of $Z$ implies that it suffices to consider the transition probabilities of~\eqref{eq:semi-markov_probs}. Thus, it is sensible to form a $J \times \bar{d}$-dimensional matrix function according to
\begin{align*}
\mat{p}(t,u,s,\mathrm{d}z) = \{p_{i\mat{\jay}}(t,u,s,\mathrm{d}z)\}_{i\in \J,\mat{\jay}\in E}.
\end{align*}
Similarly, we may form the $J$-dimensional vector
\begin{align*}
\mat{A}_{u}(t,s) &= \left(A_{1,u}(t,s),\ldots,A_{J,u}(t,s)\right)
\end{align*}
of state-wise expected accumulated cash flows, which then can be calculated as follows:
\begin{align}\label{eq:Matrix_form_semi-Markov}
\mat{A}_u(t,\mathrm{d} s) = \int_0^{u+s-t} \mat{p}(t,u,s,\mathrm{d} z)\mat{R}(s,z)\vect{1}_{\bar{d}}\,\mathrm{d} s,
\end{align}
where the reward matrices $\mat{R}(t,u)$ of~\eqref{eq:def_reward} are modified according to~\eqref{eq:indep_cond}--\eqref{eq:exit_rates_indep}. We can also cast~\eqref{eq:Matrix_form_semi-Markov} as $\mat{A}_u(t,\mathrm{d} s) = \vect{a}_u(t,s) \, \mathrm{d}s$, where then
\begin{align*}
\vect{a}_u(t,s) = \int_0^{u+s-t} \ \mat{p}(t,u,s,\mathrm{d} z)\mat{R}(s,z)\vect{1}_{\bar{d}}
\end{align*}
is a vector of state-wise expected cash flows.

\subsection{Conditional occupation and transition probabilities}\label{subsec:probs}

We now provide calculation schemes for the conditional occupation and transition probabilities. Rather than working directly with these quantities, it turns out to be fruitful to focus instead on
\begin{align*}
\bar{p}_{\mat{\jay}}(t,s,z) &= \mathbb{P}\big(\vect{X}(s) = \mat{\jay}, U(s) > z \, \big| \, \mathcal{F}^Z(t)\big), \\
\bar{p}_{i\mat{\jay}}(t,u,s,z) &= \mathbb{P}\big(\vect{X}(s) =\mat{\jay}, U(s) > z \, \big| \, Z(t) = i, U(t) = u),
\end{align*}
which suffices since
\begin{align*}
\bar{p}_{\mat{\jay}}(t,s,\mathrm{d} z) &= - p_{\mat{\jay}}(t,s,\mathrm{d} z), \\
\bar{p}_{i\mat{\jay}}(t,u,s,\mathrm{d} z) &= - p_{i\mat{\jay}}(t,u,s,\mathrm{d} z).
\end{align*}
In the following, we require the $\bar{d}\times d_j$-dimensional matrices
\begin{align*}
\mat{E}_j = \sum_{\widetilde{j} = 1}^{d_j} \vect{e}_{\mat{\jay}}\vect{e}_{\widetilde{j}}^{\prime},
\end{align*}
where $\vect{e}_{\widetilde{j}}$ is a $d_j$-dimensional column with a one in entry $\widetilde{j}$ and otherwise zeroes, and $\vect{e}_{\mat{\jay}}$ is a $\bar{d}$-dimensional column vector with a one in entry $d_1 + \cdots + d_{j-1} + \widetilde{j}$ and otherwise zeroes. Here and in the following, primes denotes matrix transposition. The entries of $\mat{E}_j$ are zero, except in the $j$'th block row, where they consist of the $d_j$-dimensional identity matrix. Roughly speaking, they allow us to extend a distribution on microstates in a single macrostate to the whole state space $E$ (and vice versa).
\begin{theorem}\label{thm:p}
It holds that
\begin{align*}
\bar{p}_{\mat{\jay}}(t,s,z) = 
\sum_{n\in\mathbb{N}_0}
1_{\left[T_{n} ,\, T_{n+1} \right)}(t)\widetilde{p}_{\mat{\jay}}(t,s,z; S_{n}),
\end{align*}
where the auxiliary quantities $\widetilde{p}_{\mat{\jay}}(t,s,z; s_n)$ are zero for $t_n \geq s - z$ and
\begin{align*}
&\widetilde{p}_{\mat{\jay}}(t,s,z; s_{n}) \\
&=
\frac
{
\vect{\alpha}(s_{n})\displaystyle \Prodi_{t_{n}}^t\!\left(\mat{I}+\mat{M}_{y_ny_n}(x)\mathrm{d} x\right)\mat{E}^{\prime}_{y_n}
}
{
\vect{\alpha}(s_{n})\displaystyle \Prodi_{t_{n}}^t\!\left(\mat{I}+\mat{M}_{y_ny_n}(x)\mathrm{d} x\right)\vect{1}_{d_{y_n}}
}
\Prodi_{t}^{(s-z)\vee t}\!\left(\mat{I}+\mat{M}(x)\mathrm{d} x\right)\!\mat{E}_j\!
\Prodi_{(s-z)\vee t}^{s}\!\left(\mat{I}+\mat{M}_{jj}(x)\mathrm{d} x\right)\vect{e}_{\widetilde{j}}
\end{align*}
for $t_n < s - z$.
\end{theorem}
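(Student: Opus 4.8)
The plan is to fix $n$, work on the event $\{T_n \le t < T_{n+1}\}$ (on which $\mathcal{F}^Z(t)$ carries exactly the realization $S_n$ of the macrostate history together with the information that no transition occurs on $(T_n,t]$), compute $\mathbb{E}[1_{(\vect{X}(s)=\mat{\jay},\,U(s)>z)}\mid\mathcal{F}^Z(t)]$ there, and identify the result with $\widetilde{p}_{\mat{\jay}}(t,s,z;S_n)$. Summing against $1_{[T_n,T_{n+1})}(t)$ over $n$ then gives the statement, since these events partition the sample space. The engine is that $\mathcal{F}^Z(t)\subseteq\mathcal{F}^{\vect{X}}(t)$ and $\vect{X}$ is Markov, so by the tower property the conditional probability equals $\mathbb{E}[\,h(\vect{X}(t))\mid\mathcal{F}^Z(t)]$ for a suitable forward functional $h$; this splits the task into a filtering part (the conditional law of $\vect{X}(t)$) and a transition part (the functional $h$).

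First I would determine the conditional law of $\vect{X}(t)$ given $\mathcal{F}^Z(t)$ on $\{T_n\le t<T_{n+1}\}$. Since only the macrostate is observed and no transition has occurred on $(T_n,t]$, this is a hidden-Markov filter concentrated on the microstates of $y_n$, namely
\[
\frac{\vect{\alpha}(S_n)\,\Prodi_{T_n}^{t}\!\left(\mat{I}+\mat{M}_{y_ny_n}(x)\md x\right)}{\vect{\alpha}(S_n)\,\Prodi_{T_n}^{t}\!\left(\mat{I}+\mat{M}_{y_ny_n}(x)\md x\right)\vect{1}_{d_{y_n}}}.
\]
This can be read off from the construction of $\vect{\alpha}$ underlying Proposition~\ref{thm:MPP}, and is consistent with Remark~\ref{rmk:iph-link}; alternatively it follows by a direct Bayes computation that weighs the hidden microstate paths by the survival factor $\Prodi_{T_n}^{t}(\mat{I}+\mat{M}_{y_ny_n}(x)\md x)$. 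The matrix $\mat{E}_{y_n}^{\prime}$ then lifts this $d_{y_n}$-dimensional row vector to a row vector over all of $E$.

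The main work is the transition part. Starting from $\vect{X}(t)=\mat{\iay}$, I would argue that $\{\vect{X}(s)=\mat{\jay},\,U(s)>z\}$ holds precisely when $\vect{X}$ runs freely on $[t,(s-z)\vee t]$, sits in some microstate of macrostate $j$ at time $(s-z)\vee t$, and then remains in macrostate $j$ up to $s$ and terminates in microstate $\widetilde{j}$. The free leg contributes $\Prodi_t^{(s-z)\vee t}(\mat{I}+\mat{M}(x)\md x)$, the restriction to $j$ is effected by $\mat{E}_j$, the no-exit constraint on $((s-z)\vee t,s]$ contributes $\Prodi_{(s-z)\vee t}^{s}(\mat{I}+\mat{M}_{jj}(x)\md x)$, and $\vect{e}_{\widetilde{j}}$ selects the terminal microstate. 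The delicate points are the following. (i) The two regimes $s-z>t$ and $s-z\le t$ must be unified through the single cut-off $(s-z)\vee t$: when $s-z\le t$ the free leg collapses to $\mat{I}$ and $\mat{E}_{y_n}^{\prime}\mat{E}_j$ forces $y_n=j$, as it should, since $Z$ cannot have jumped on $(T_n,s]$. (ii) Absolute continuity of the sojourn times lets me equate $\{U(s)>z\}$ with $\{U(s)\ge z\}$, so that a possible entry into $j$ exactly at $(s-z)\vee t$ carries no mass. (iii) In the regime $s-z\le t$ the window $(s-z,t]$ must itself be free of macro transitions, which on $\{T_n\le t<T_{n+1}\}$ holds iff $T_n<s-z$; a jump at $T_n\ge s-z$ would place the last transition before $s$ inside $(s-z,s]$ and make $U(s)>z$ impossible. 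This produces the indicator $1_{(t_n<s-z)}$, i.e. the stated vanishing of $\widetilde{p}_{\mat{\jay}}$ for $t_n\ge s-z$. I expect the uniform bookkeeping in (i) and (iii) to be the principal obstacle.

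Finally I would assemble the pieces: averaging $h$ over the filter of the second paragraph amounts to left-multiplying the forward expression by the normalised, $\mat{E}_{y_n}^{\prime}$-embedded row vector, which reproduces exactly the claimed formula for $\widetilde{p}_{\mat{\jay}}(t,s,z;S_n)$ when $t_n<s-z$, and $0$ otherwise. Multiplying by $1_{[T_n,T_{n+1})}(t)$ and summing over $n\in\mathbb{N}_0$ yields the asserted representation of $\bar{p}_{\mat{\jay}}(t,s,z)$.
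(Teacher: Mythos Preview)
Your proposal is correct and follows essentially the same route as the paper's proof: both decompose over $n$ via the partition $\{T_n\le t<T_{n+1}\}$, identify the conditional microstate distribution at time $t$ (your ``filter''), apply the Markov property of $\vect{X}$ for the forward leg, and split into the two regimes $s-z\le t$ and $s-z>t$. The only cosmetic difference is that the paper writes out the microstate sums explicitly and invokes Lemma~\ref{lemma:help} together with Proposition~\ref{thm:MPP} to justify the filter, whereas you phrase this as a Bayes/hidden-Markov computation; the content is the same.
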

Note that
\begin{align}\label{eq:yn_is_j}
1_{(t_n < s-z \leq t)}
\widetilde{p}_{\mat{\jay}}(t,s,z; s_{n})
=
1_{(y_n = j)}
\frac
{
\vect{\alpha}(s_{n})\displaystyle \Prodi_{t_{n}}^s\!\left(\mat{I}+\mat{M}_{y_ny_n}(x)\mathrm{d} x\right)\vect{e}_{\widetilde{j}}
}
{
\vect{\alpha}(s_{n})\displaystyle \Prodi_{t_{n}}^t\!\left(\mat{I}+\mat{M}_{y_ny_n}(x)\mathrm{d} x\right)\vect{1}_{d_{y_n}}
}.
\end{align}
\begin{proof}
Please refer to Appendix~\ref{ap:A}.
\end{proof}
If the reset {property is} satisfied, in which case $Z$ is a time-inhomogeneous semi-Markovian process, we can use~\eqref{eq:reset_fraction} to immediately obtain the following corollary.
\begin{corollary}\label{cor:semi_markov_prob}
Assume {that the reset property} holds. Then $\bar{p}_{i\mat{\jay}}(t,u,s,z)$ is zero for $t-u \geq s - z$ and
\begin{align*}
&\bar{p}_{i\mat{\jay}}(t,u,s,z) \\
&=
\frac{\vect{\pi}_i(t-u)\displaystyle \Prodi_{t-u}^t\!\left(\mat{I}+\mat{M}_{ii}(x)\mathrm{d} x\right)\mat{E}^{\prime}_i }{\vect{\pi}_i(t-u)\displaystyle \Prodi_{t-u}^t\!\left(\mat{I}+\mat{M}_{ii}(x)\mathrm{d} x\right)\vect{1}_{d_i}}\Prodi_{t}^{(s-z)\vee t}\!\left(\mat{I}+\mat{M}(x)\mathrm{d} x\right)\!\mat{E}_j\!
\Prodi_{(s-z)\vee t}^{s}\!\left(\mat{I}+\mat{M}_{jj}(x)\mathrm{d} x\right)\vect{e}_{\widetilde{j}}
\end{align*}
for $t - u < s - z$.
\end{corollary}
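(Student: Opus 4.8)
The plan is to obtain the corollary directly from Theorem~\ref{thm:p} by combining semi-Markovianity with the collapse~\eqref{eq:reset_fraction}. The two quantities $\bar{p}_{\mat{\jay}}(t,s,z)$ and $\bar{p}_{i\mat{\jay}}(t,u,s,z)$ differ only in what is conditioned upon: the former conditions on all of $\mathcal{F}^Z(t)$, the latter only on $\{Z(t) = i, U(t) = u\}$. Since $\sigma(Z(t),U(t)) \subseteq \mathcal{F}^Z(t)$, the tower property yields
\begin{align*}
\bar{p}_{i\mat{\jay}}(t,u,s,z) = \mathbb{E}\big[\bar{p}_{\mat{\jay}}(t,s,z)\, \big|\, Z(t) = i, U(t) = u\big],
\end{align*}
so it suffices to show that, under the reset property, the random variable $\bar{p}_{\mat{\jay}}(t,s,z)$ is already $\sigma(Z(t),U(t))$-measurable and then to read off its value on $\{Z(t)=i,U(t)=u\}$.

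First I would fix the unique index $n$ with $t \in [T_n, T_{n+1})$, so that the sum in Theorem~\ref{thm:p} collapses to the single term $\widetilde{p}_{\mat{\jay}}(t,s,z;S_n)$. On this event the current macrostate and duration satisfy $Y_n = Z(t)$ and $T_n = t - U(t)$, the latter directly from the definition~\eqref{eq:U_def}. Next I would invoke~\eqref{eq:reset_fraction}, which under the reset property replaces the leading history-dependent fraction $\vect{\alpha}(S_n)/(\vect{\alpha}(S_n)\vect{1}_{d_{Y_n}})$ by $\vect{\pi}_{Y_n}(T_n)$. The decisive point is that this erases all dependence on the earlier marks $(T_\ell, Y_\ell)_{\ell<n}$: the whole expression for $\widetilde{p}_{\mat{\jay}}(t,s,z;S_n)$ then depends on $S_n$ solely through $(Y_n, T_n) = (Z(t), t-U(t))$. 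This is exactly the $\sigma(Z(t),U(t))$-measurability required above, whence the conditional expectation is trivial and amounts to substituting $Y_n = i$ and $T_n = t - u$.

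Carrying out that substitution turns $\vect{\pi}_{Y_n}(T_n)$ into $\vect{\pi}_i(t-u)$ and the adjacent product integrals $\Prodi_{T_n}^t(\mat{I}+\mat{M}_{Y_nY_n}(x)\md x)$ into $\Prodi_{t-u}^t(\mat{I}+\mat{M}_{ii}(x)\md x)$, while the trailing factors $\Prodi_t^{(s-z)\vee t}(\mat{I}+\mat{M}(x)\md x)\mat{E}_j\Prodi_{(s-z)\vee t}^s(\mat{I}+\mat{M}_{jj}(x)\md x)\vect{e}_{\widetilde{j}}$ remain untouched. The support condition $t_n \geq s-z$ from Theorem~\ref{thm:p} likewise becomes $t-u \geq s-z$ under $t_n = t-u$, giving the asserted vanishing. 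I expect the only genuine subtlety to be the clean justification that, under the reset property, $\widetilde{p}_{\mat{\jay}}(t,s,z;S_n)$ truly forgets $S_{n-1}$; everything downstream is a mechanical substitution already licensed by~\eqref{eq:reset_fraction} and the identification of $(Y_n,T_n)$ with $(Z(t), t-U(t))$.
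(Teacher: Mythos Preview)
Your proposal is correct and follows essentially the same route as the paper: the paper derives the corollary as an immediate consequence of Theorem~\ref{thm:p} together with the collapse~\eqref{eq:reset_fraction}, and you have simply spelled out the details of that reduction, including the tower-property justification and the identification $(Y_n,T_n)=(Z(t),t-U(t))$ on $\{T_n\le t<T_{n+1}\}$. The only minor imprecision is that the leading fraction in Theorem~\ref{thm:p} is not literally $\vect{\alpha}(S_n)/(\vect{\alpha}(S_n)\vect{1}_{d_{Y_n}})$ but carries the extra factor $\Prodi_{T_n}^t(\mat{I}+\mat{M}_{Y_nY_n}(x)\md x)$ in both numerator and denominator; however,~\eqref{eq:reset_fraction} still licenses replacing $\vect{\alpha}(S_n)$ by $\vect{\pi}_{Y_n}(T_n)$ up to a common scalar, exactly as your subsequent substitution reflects.
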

We note that $z \mapsto \bar{p}_{i\mat{\jay}}(t,u,s,z)$ is continuous on $[0,u+s-t)$ and actually constant on $[s-t, u+s-t)$. If $i \neq j$, then the continuity extends to $[0,u+s-t]$, while
\begin{align*}
\Delta \bar{p}_{i\mat{\iay}}(t,u,s,u+s-t)
=
- \lim_{h \downarrow 0} \bar{p}_{i\mat{\iay}}(t,u,s,s-t-h).
\end{align*}
The fact that $z \mapsto \bar{p}_{i\mat{\jay}}(t,u,s,z)$ is constant on $[s-t, u+s-t)$ may be utilized to reduce the computational load when calculating the expected accumulated cash flows.

We conclude this subsection by presenting an algorithm for the computation of expected cash flows in models with the reset property{, see the following page}. {The computational} scheme is similar to the algorithm for general semi-Markov models based on Kolmogorov's forward integro-differential equation proposed in~\cite[Section~3]{BuchardtMollerSchmidt}. Both algorithms employ a two-dimensional time and duration grid, and one would therefore expect the computational loads to be comparable.

\subsection{Duration-independent payments}\label{subsec:dur_indep}

We now consider the simplifications arising from duration-independent payments, that is, when
\begin{align}\label{eq:no_dur_pay}
b_j(t,u) = b_j(t) \quad &\text{and} \quad b_{jk}(t,u) = b_{jk}(t),
\end{align}
or, equivalently,
\begin{align*}
\vect{b}(t,u) = \vect{b}(t)\quad &\text{and}\quad \mat{B}(t,u) = \mat{B}(t).
\end{align*}
In this case,
{
\begin{align*}
A(t,\mathrm{d}s)
=
\sum_{\mat{\jay}\in E} \bar{p}_{\mat{\jay}}(t,s,0) \bigg( b_j(s) +  \sum_{\mat{\kay}\in E \atop k \neq j}  b_{jk}(s) \mu_{\mat{\jay}\mat{\kay}}(s) \bigg) \mathrm{d}s
=
\mat{\bar{p}}(t,s,0)\mat{R}(s)\vect{1}_{\bar{d}} \, \mathrm{d}s,
\end{align*}}
where 
\begin{align*}
\mat{\bar{p}}(t,s,0) &= \left\{\bar{p}_{\mat{\jay}}(t,s,0)\right\}_{\mat{\jay}\in E}, \\
\mat{R}(t) &= \mat{\Delta}(\vect{b}(t)) +  \mat{M}(t)\bullet \mat{B}(t).
\end{align*}
According to Theorem~\ref{thm:p},
\begin{align}\nonumber
\bar{p}_{\mat{\jay}}(t,s,0) &= 
\sum_{n\in\mathbb{N}_0}
1_{\left[T_{n} ,\, T_{n+1} \right)}(t)\widetilde{p}_{\mat{\jay}}(t,s,0; S_{n}), \\ \label{eq:nice_moments}
\widetilde{p}_{\mat{\jay}}(t,s,0; s_{n})
&=
\frac
{
\vect{\alpha}(s_{n})\displaystyle \Prodi_{t_{n}}^t\!\left(\mat{I}+\mat{M}_{y_ny_n}(x)\mathrm{d} x\right)\mat{E}^{\prime}_{y_n}
}
{
\vect{\alpha}(s_{n})\displaystyle \Prodi_{t_{n}}^t\!\left(\mat{I}+\mat{M}_{y_ny_n}(x)\mathrm{d} x\right)\vect{1}_{d_{y_n}}
}
\Prodi_{t}^{s}\!\left(\mat{I}+\mat{M}(x)\mathrm{d} x\right)\!\vect{e}_{\mat{\jay}}.
\end{align}

\begin{algorithm}[H]
\caption{Computation of expected cash flows in an aggregate Markov model with the reset property.}\label{alg:cf_reset}
\begin{algorithmic}
\State \textit{\textbf{Input}}: Current time $t \in [0,\eta)$, current duration $u\in [0,t]$, and a grid $\mathcal{T}:\, t = t_0<t_1<\cdots<t_n = \eta$ on the interval $[t,\eta]$.
\begin{enumerate} 
\item[ 1)] 
Calculate initial conditional distributions at time $t$:
\begin{align*}
\vect{\gamma}_i(t,u) &= \frac{\vect{\pi}_i(t-u)\displaystyle \Prodi_{t-u}^t\!\left(\mat{I}+\mat{M}_{ii}(x)\mathrm{d} x\right)\mat{E}^{\prime}_i }{\vect{\pi}_i(t-u)\displaystyle \Prodi_{t-u}^t\!\left(\mat{I}+\mat{M}_{ii}(x)\mathrm{d} x\right)\vect{1}_{d_i}}, \qquad i\in \mathcal{J}, \\[0.4 cm]
\vect{\gamma}(t,u) &= \left(\vect{\gamma}_1(t,u), \ldots, \vect{\gamma}_J(t,u)\right)'.
\end{align*} 
\item[ 2)] Compute transition probabilities for the Markovian state process $\vect{X}$,
\begin{align*}
\mat{P}(t,t_\ell) = \Prodi_{t}^{t_\ell}\!\left(\mat{I}+\mat{M}(x)\mathrm{d} x\right), \qquad \ell \in \{1,\ldots,n\},
\end{align*}
by solving Kolmogorov's forward differential equation on $\mathcal{T}$. \medskip 
\item[ 3)] For $\ell \in \{1,\ldots,n\}$: 

\begin{enumerate}[i)]
\item Compute state-wise stay probabilities until time $t_\ell$:  
\begin{align*}
\mat{\bar{P}}_{jj}(t_{\ell^{\prime}},t_\ell) &= \Prodi_{t_{\ell^{\prime}}}^{t_\ell}\!\left(\mat{I}+\mat{M}_{jj}(x)\mathrm{d} x\right), \qquad t_{\ell^{\prime}}\in \mathcal{T},\ \ell^{\prime} \leq \ell,\ j\in \mathcal{J}, \\
\mat{\bar{P}}(t_{\ell^{\prime}},t_\ell) &= \mat{\Delta}\!\left((\mat{\bar{P}}_{11}(t_{\ell^{\prime}},t_\ell),\ldots,\mat{\bar{P}}_{JJ}(t_{\ell^{\prime}},t_\ell)\right)\!, 
\end{align*}
by solving Kolmogorov's backward differential equation on $\mathcal{T}$ starting at  $t_\ell$.\medskip 
\item Calculate the vector of state-wise expected cash flows for time $t_\ell$: 
\begin{align*}
\vect{a}_u(t,t_\ell) &= \vect{\gamma}(t,u)\bigg(\int_t^{t_{\ell}} \mat{P}(t,v)\mat{\widetilde{M}}(v)\mat{\bar{P}}(v,t_\ell)\mat{R}(t_\ell,t_\ell - v)\vect{1}_{\bar{d}} \,\, \mathrm{d}v \\
& \qquad\qquad\qquad + \mat{\bar{P}}(t,t_\ell)\mat{R}(t_\ell, u+t_\ell - t)\vect{1}_{\bar{d}} \bigg),
\end{align*}
using numerical integration methods on the grid $\mathcal{T}$ for the integral, and where
\begin{align*}
\widetilde{\mat{M}}_j(v) &= \mat{M}(v)\mat{E}_j - \mat{E}_j\mat{M}_{jj}(v),\qquad j\in \mathcal{J}, \\[0.2 cm]
\widetilde{\mat{M}}(v) &= \left(\widetilde{\mat{M}}_1(v), \ldots, \widetilde{\mat{M}}_J(v)\right)\!.
\end{align*}
\end{enumerate}
\end{enumerate}
    \State \textit{\textbf{Output}:\ For each  $\ell \in \{1,\ldots,n\}$, a vector of state-wise expected cash flows\, $\vect{a}_u(t,t_\ell)$}. 
\end{algorithmic}
\end{algorithm}

If the reset {property is} satisfied, then we are rather interested in $\vect{A}_u(t,s)$, which subject to~\eqref{eq:no_dur_pay} reads
\begin{align*}
\mat{A}_u(t,\mathrm{d} s) = \mat{\bar{p}}(t,u,s,0)\mat{R}(s)\vect{1}_{\bar{d}}\,\mathrm{d} s,
\end{align*}
where
\begin{align*}
\mat{\bar{p}}(t,u,s,0) &= \left\{\bar{p}_{i\mat{\jay}}(t,u,s,0)\right\}_{i\in\mathcal{J},\mat{\jay}\in E}.
\end{align*}
Furthermore,
\begin{align*}
\bar{p}_{i\mat{\jay}}(t,u,s,0)
=
\frac{\vect{\pi}_i(t-u)\displaystyle \Prodi_{t-u}^t\!\left(\mat{I}+\mat{M}_{ii}(x)\mathrm{d} x\right)\mat{E}^{\prime}_i }{\vect{\pi}_i(t-u)\displaystyle \Prodi_{t-u}^t\!\left(\mat{I}+\mat{M}_{ii}(x)\mathrm{d} x\right)\vect{1}_{d_i}}
\Prodi_{t}^{s}\!\left(\mat{I}+\mat{M}(x)\mathrm{d} x\right)\!\vect{e}_{\mat{\jay}}.
\end{align*} 
This should lead to a significant reduction in computational load since the above simplification allows one to adapt Algorithm \ref{alg:cf_reset} to employ only a one-dimensional time grid. For general semi-Markov models, where the computation of transition probabilities relies on Kolmogorov's forward integro-differential equation, such a simplification is not possible. It should be noted, however, that the computation of the term
\begin{align*}
\Prodi_{t}^{s}\!\left(\mat{I}+\mat{M}(x)\mathrm{d} x\right)
\end{align*}
might still be rather burdensome if $\bar{d}$ is large. To conclude, if the number of microstates per macrostates is not too large, aggregate Markov models might hold a competitive advantage over general semi-Markov models if the payments of interest are duration-independent.

The above discussion relates to duration-independent payments. However, it is also applicable to certain crude duration-dependent payments. This is partly illustrated by the numerical example in Section~\ref{sec:num}, where we consider a contract stipulating a waiting period.

\subsection{Policyholder behaviour}\label{subsec:extensions}

We now extend the results of Subsections~\ref{subsec:valuation}--\ref{subsec:probs} to include incidental policyholder behaviour such as free-policy conversion and expedited or postponed retirement. The inclusion of policyholder options is quite popular in the life insurance literature, confer with~\cite{henriksen2014, buchardt2015, BuchardtMollerSchmidt, GadNielsen}, especially for Markov chains. General insights based on change of measure techniques were recently provided in~\cite{furrer2022}. In the following, we adapt the general methods and results of~\cite{furrer2022} to our setting.

Suppose that the macrostates $\mathcal{J}$ can be decomposed as
\begin{align*}
\mathcal{J} = \mathcal{J}_0 \cup \mathcal{J}_1\cup \left\{\nabla\right\},
\end{align*}
with $1\in \mathcal{J}_0$ and where the transition intensity matrix function $\mat{M}(t)$ of the microstate process $\vect{X}$ is composed of block matrix functions satisfying
\begin{align*}
\mat{M}_{jk}(t) &=\mat{0}, \quad &j\in \mathcal{J}_1, k\in \mathcal{J}_0, \\ 
\mat{M}_{j\nabla}(t) &= \mat{0}, &j\in \mathcal{J}.
\end{align*} 
In that case, the macrostate process $Z$ almost surely never hits $\nabla$, and, upon entering the states $\mathcal{J}_1$, the process never returns to $\mathcal{J}_0$. Recall that $Z(0) \equiv 1$, so the process starts in $\mathcal{J}_0$. We may thus interpret $\mathcal{J}_0$ as the states of the insured prior to exercising their policyholder option and $\mathcal{J}_1$ as the states of the insured after exercising the option. (The role of the `dummy' state $\nabla$ will be clear later.) The first hitting time of $\mathcal{J}_1$, given by
\begin{align*}
\tau = \inf\{ t > 0 : Z(t) \in \mathcal{J}_1\},
\end{align*}
then constitutes the exercise time of the option. Since $Z$ almost surely never hits $\nabla$, we may as well take $b_\nabla(s,u) = 0$, $b_{j\nabla}(s,u) = 0$, and $b_{\nabla k}(s,u) =0 $. At exercise, the original contractual payments are scaled with the factor $\rho(\tau,Z(\tau-),Z(\tau))$, where $0 < \rho(t,j,k) \leq 1$ is some suitably regular deterministic function. The payment process of interest $B^\rho = \{B^\rho(t)\}_{t\geq0}$ thus takes the form
\begin{align*}
\md B^\rho(t) = \rho\big(\tau,Z(\tau-),Z(\tau)\big)^{1_{(\tau \leq t)}} \md B(t), \quad B^\rho(0) = B(0).
\end{align*}
The scaling factor is typically selected as to maintain actuarial equivalence with respect to a safe-side valuation basis, the so-called technical basis; we just consider it given. The corresponding expected accumulated cash flow $A^\rho(t,s)$ valued at time $t$ is
\begin{align*}
A^\rho(t,s)
=
\mathbb{E}[B^\rho(s)-B^\rho(t)\, | \, \mathcal{F}^Z(t)] 
=
\mathbb{E}\bigg[\int_t^s  \rho\big(\tau,Z(\tau-),Z(\tau)\big)^{1_{(\tau \leq u)}} \mathrm{d}B(u) \,\bigg| \, \mathcal{F}^Z(t)\bigg].
\end{align*}
The following result is a consequence of~\cite[Theorem~3.6 and Proposition~3.10]{furrer2022}.
\begin{proposition}\label{prop:measurechange}
It holds that
\begin{align*}
A^\rho(t,s)
=
\widehat{\mathbb{E}}[B(s) - B(t) \, | \, \mathcal{F}^Z(t)] \rho\big(\tau,Z(\tau-),Z(\tau)\big)^{1_{(\tau \leq t)}},
\end{align*}
where $\widehat{\mathbb{E}}$ denotes expectation with respect to another probability measure $\widehat{\mathbb{P}}$. Furthermore, the $(\mathbb{F}^Z, \widehat{\mathbb{P}})$-compensators of the counting processes are given by
\begin{align*}
\mathrm{d} \widehat{\Lambda} _{jk}(t) &= \rho(t,j,k) \, \mathrm{d} \Lambda _{jk}(t), \quad &j\in \mathcal{J}_0, k\in \mathcal{J}_1, \\[2mm]
\mathrm{d} \widehat{\Lambda} _{j\nabla}(t) &= \sum_{k\in \mathcal{J}_1}\!\left(1-\rho(t,j,k)\right) \mathrm{d}\Lambda _{jk}(t),  &j\in \mathcal{J}_0, \\
\mathrm{d} \widehat{\Lambda} _{\nabla k}(t) &= 0, &k\in \mathcal{J}, k\neq \nabla, \\[2mm]
\mathrm{d} \widehat{\Lambda} _{jk}(t) &= \mathrm{d} \Lambda _{j k}(t), &\text{otherwise.} 
\end{align*}
\end{proposition}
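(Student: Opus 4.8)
The plan is to recognise the present construction as a special instance of the abstract change-of-measure framework of~\cite{furrer2022} and then to invoke the two cited results directly, so that the bulk of the work lies in matching hypotheses rather than in fresh computation. The governing observation is that the scaling factor $\rho(\tau,Z(\tau-),Z(\tau))^{1_{(\tau \leq t)}}$ acts as a likelihood (density) process on $\mathbb{F}^Z$: since $0 < \rho \leq 1$ and $\tau$ is the first hitting time of $\mathcal{J}_1$, this factor is a non-negative, $\mathbb{F}^Z$-adapted process that is triggered only at the single exercise transition from $\mathcal{J}_0$ into $\mathcal{J}_1$. This is precisely the structure under which~\cite{furrer2022} carries out its change of measure.

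First I would record the structural assumptions that the present setup must satisfy in order to fall under~\cite[Theorem~3.6]{furrer2022}. These are: (i) the decomposition $\mathcal{J} = \mathcal{J}_0 \cup \mathcal{J}_1 \cup \{\nabla\}$ with $1 \in \mathcal{J}_0$; (ii) the block-zero conditions $\mat{M}_{jk}(t) = \mat{0}$ for $j \in \mathcal{J}_1$, $k \in \mathcal{J}_0$ and $\mat{M}_{j\nabla}(t) = \mat{0}$ for all $j$, which together guarantee that under $\mathbb{P}$ the macrostate process $Z$ never returns to $\mathcal{J}_0$ once it has left and never reaches $\nabla$; and (iii) the absolute continuity of the $(\mathbb{F}^Z, \mathbb{P})$-compensators, which is exactly the content of Theorem~\ref{thm:compensator}, together with the boundedness and regularity of $\rho$. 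Points (i)--(iii) are immediate from the standing assumptions of the subsection.

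Second, I would apply~\cite[Theorem~3.6]{furrer2022} to obtain the representation. That theorem constructs the measure $\widehat{\mathbb{P}}$ through the likelihood process generated by $\rho$ and shows that valuing the scaled stream $B^\rho$ under $\mathbb{P}$ coincides with valuing the unscaled stream $B$ under $\widehat{\mathbb{P}}$, up to the scaling already realised on the event $\{\tau \leq t\}$; this yields the displayed formula for $A^\rho(t,s)$. Then~\cite[Proposition~3.10]{furrer2022} identifies the $\widehat{\mathbb{P}}$-compensators. The underlying mechanism is that the change of measure thins the exercise intensities $\lambda_{jk}$, $j \in \mathcal{J}_0$, $k \in \mathcal{J}_1$, by the factor $\rho(t,j,k)$ and reroutes the removed mass $(1-\rho(t,j,k))$ to the cemetery state $\nabla$, while leaving all remaining intensities unchanged. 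This reproduces the displayed system of compensators and clarifies the role of $\nabla$ as the receptacle for the probability mass lost to scaling, which is what keeps $\widehat{\mathbb{P}}$ a genuine probability measure.

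The main obstacle is the verification step, not the invocation. One must confirm that the notational conventions of~\cite{furrer2022}, in particular how the exercise event and the scaling are encoded, align with those used here, and that the compensators furnished by Theorem~\ref{thm:compensator} meet the predictability and integrability conditions required for the likelihood process to be a true martingale rather than merely a local one. Here the fact that $\rho$ is deterministic, bounded, and bounded away from zero, together with the single exercise transition into $\mathcal{J}_1$, renders the likelihood process bounded and hence uniformly integrable, so this technical point is mild. Once the hypotheses are confirmed, both displayed conclusions follow without further computation.
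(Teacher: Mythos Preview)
Your proposal is correct and matches the paper's own treatment: the paper does not give a standalone proof but simply states that the proposition is a consequence of~\cite[Theorem~3.6 and Proposition~3.10]{furrer2022}. Your outline of how to verify the hypotheses (the state-space decomposition, the block-zero conditions, and the absolute continuity of the compensators from Theorem~\ref{thm:compensator}) is exactly the matching exercise the paper leaves implicit.
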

Recall that the compensators determine the distribution of $Z$. Thus from the expressions for the  compensators obtained in Theorem~\ref{thm:compensator} and the above proposition, we find that $Z$ under $\widehat{\mathbb{P}}$ follows an aggregate Markov model with transition intensity matrix function $\mat{\widehat{M}}(t)$ composed of block matrix functions
\begin{align*}
\mat{\widehat{M}}_{jk}(t) &= \rho(t,j,k)\mat{M}_{jk}(t), \quad &j\in \mathcal{J}_0, k\in \mathcal{J}_1, \\
\mat{\widehat{M}}_{j\nabla}(t) &= \sum_{k\in \mathcal{J}_1}\!\left(1-\rho(t,j,k)\right)\!\mat{M}_{jk}(t)\vect{e},  &j\in \mathcal{J}_0,\\
\mat{\widehat{M}}_{\nabla k}(t) &= \mat{0}, &k\in \mathcal{J}, k\neq \nabla,\\
\mat{\widehat{M}}_{jk}(t) &= \mat{M}_{j k}(t), &\text{otherwise.} 
\end{align*}
Furthermore, if the reset {property is} satisfied under $\mathbb{P}$, this is also the case under $\widehat{\mathbb{P}}$. All in all, according to Proposition~\ref{prop:measurechange} the expected accumulated cash flow
\begin{align*}
\widehat{A}(t,s) = \widehat{\mathbb{E}}[B(s) - B(t) \, | \, \mathcal{F}^Z(t)],
\end{align*}
and thus also the expected accumulated cash flow $A^\rho(t,s)$, can be calculated using the formulas of Subsections~\ref{subsec:valuation}--\ref{subsec:probs}, but with $\mat{M}(t)$ replaced by $\mat{\widehat{M}}(t)$.

\section{Numerical example}\label{sec:num}

We conclude the paper by presenting a numerical example that serves to illustrate the methods presented in Section~\ref{sec:reserves_cf}. The probabilistic models, described by transition rates on the micro level, are taken from the numerical example in~\cite{AhmadBladt}, where aggregate Markov models corresponding to Figure~\ref{fig:disability_micro} with the reset property are fitted to simulated data on a macro level for different numbers of disability microstates, $d_2$. The simulations are based on a (smooth) semi-Markovian disability model employed by a large Danish life insurance company that has been reported to and published by the Danish Financial Supervisory Authority. The only duration effects present in this model concern the rates from the disability state, which also explains why we do not add extra microstates to the active macrostate. The rates from the disability state are, at least after some months, decreasing as functions of duration.

{
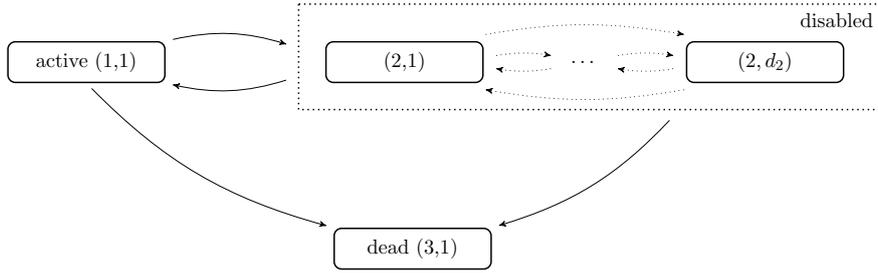
\begin{figure}[h!]
	\centering
\scalebox{0.7}{	\begin{tikzpicture}[node distance=2em and 0em]
		\node[punkt] (21) {$(2,\!1)$};
		\node[right = 15mm of 21] (2temp) {$\cdots$};
		\node[punkt, right = 38mm of 21] (22) {$(2,d_2)$};
		\node[punkt, left = 30mm of 21] (11) {active $(1,\!1)$};
		\node[draw = none, fill = none, left = 50 mm of 22] (test) {};
		\node[punkt, below = 30mm of test] (3) {dead $(3,\!1)$};
		\node[right = 15pt] at ($(22.south)+(0,1.2)$) {disabled};
		\draw[thick, dotted] ($(21.north west)+(-0.5,0.7)$) rectangle ($(22.south east)+(0.8,-0.5)$);
		\path
		($(11.north east)$)	edge [pil, bend left = 15]	($(21.north west) + (-0.6,-0.1)$)
		($(21.south west)+(-0.6,0.1)$) edge [pil, bend left=15] ($(11.south east)$)
		($(22.south) + (-1.7,-0.6)$) edge [pil, bend left=15] ($(3.north east)$)
		($(21.north east)  + (-0.1,0.1)$)	edge [pildotted, bend left = 10] ($(22.north west) + (0.1,0.1)$)
		($(22.south west)  + (0.1,-0.1)$)	edge [pildotted, bend left = 10] ($(21.south east) + (-0.1,-0.1)$)
		($(21.east)  + (0.1,0.1)$)	edge [pildotted, bend left = 10] ($(2temp.west) + (-0.1,0.1)$)
		($(2temp.west)  + (-0.1,-0.1)$)	edge [pildotted, bend left = 10] ($(21.east) + (0.1,-0.1)$)
		($(2temp.east)  + (0.1,0.1)$)	edge [pildotted, bend left = 10] ($(22.west) + (-0.1,0.1)$)
		($(22.west)  + (-0.1,-0.1)$)	edge [pildotted, bend left = 10] ($(2temp.east) + (0.1,-0.1)$)
		($(11.south)$) edge [pil, bend right=15] ($(3.north west)$)
	;
	\end{tikzpicture}}
	\caption{Disability model with $d_2$ unobservable disability microstates.}
	\label{fig:disability_micro}
\end{figure}}

{
In the following, the analysis of~\cite{AhmadBladt} is extended with calculations and comparisons of state-wise expected cash flows and prospective reserves. We focus on coverage which admits duration-dependent payments, namely disability coverage with a waiting period. To be specific, we consider a male of age $t=40$ years with a disability annuity of rate $1$ per year, starting $3$ months after the onset of disability, but only until retirement at age $65$. The only non-zero payments function is thus $b_2(s,z)$, which reads
\begin{align*}
b_2(s,z) = 1_{(s < 65)} 1_{(z > 1/4)},
\end{align*}
and we may therefore set $\eta = 65$. The aggregate Markov models fitted in~\cite{AhmadBladt} have the reset property and, consequently, the associated macrostate processes are (time-inhomogeneous) semi-Markov and with transition rates according to~\eqref{eq:int_reset}. In Figure~\ref{fig:rates_EM}, we provide for illustrative purposes a selection of the estimated transition rates that are particularly relevant for the coverage of interest, but only for initial duration $u=0,1$ years. We refer to the numerical example in~\cite{AhmadBladt} for further details.}

{
\begin{figure}[h!]
   \centering
   \includegraphics[width=0.85\textwidth,trim={8mm 4mm 8mm 0mm}]{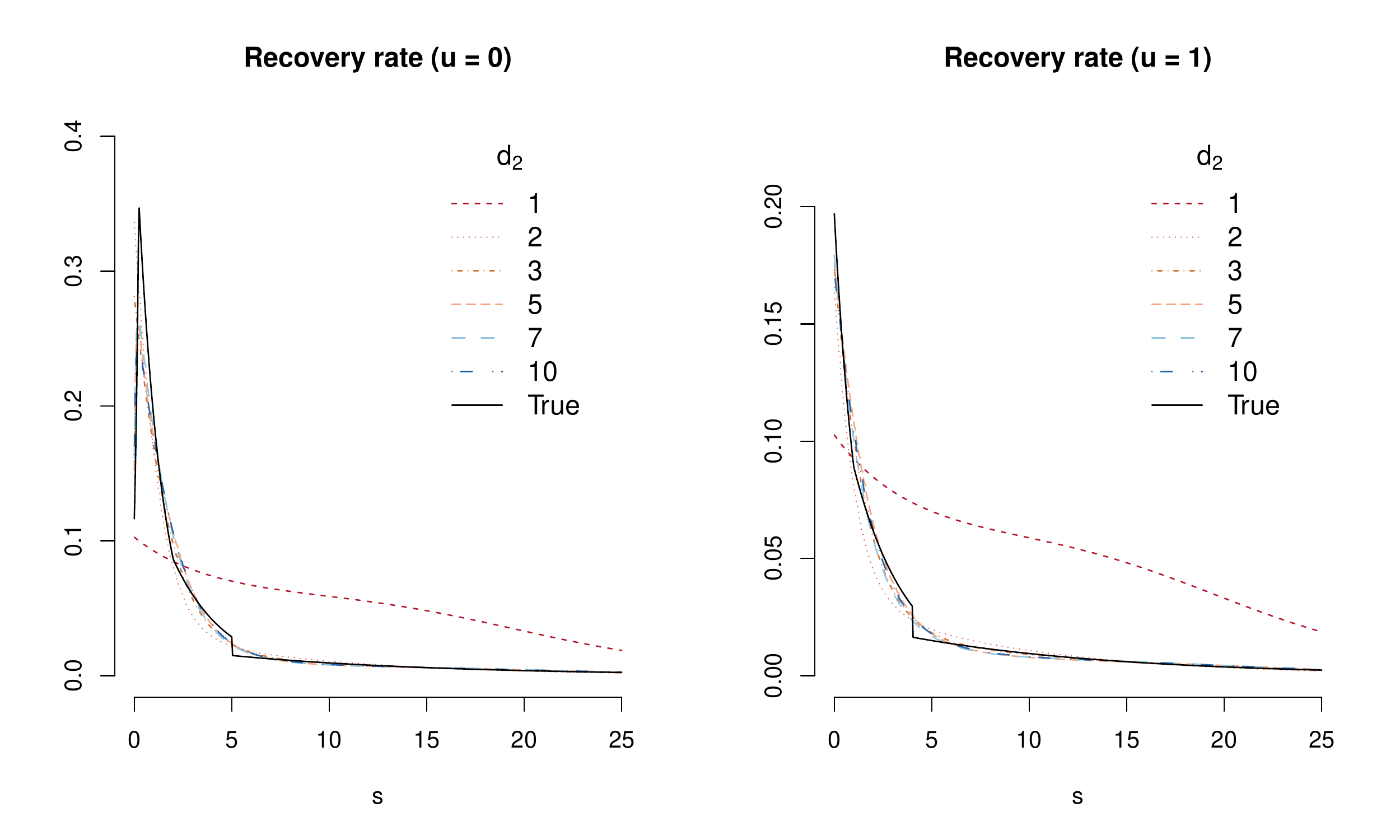}
   \includegraphics[width=0.85\textwidth,trim={8mm 4mm 8mm 0mm}]{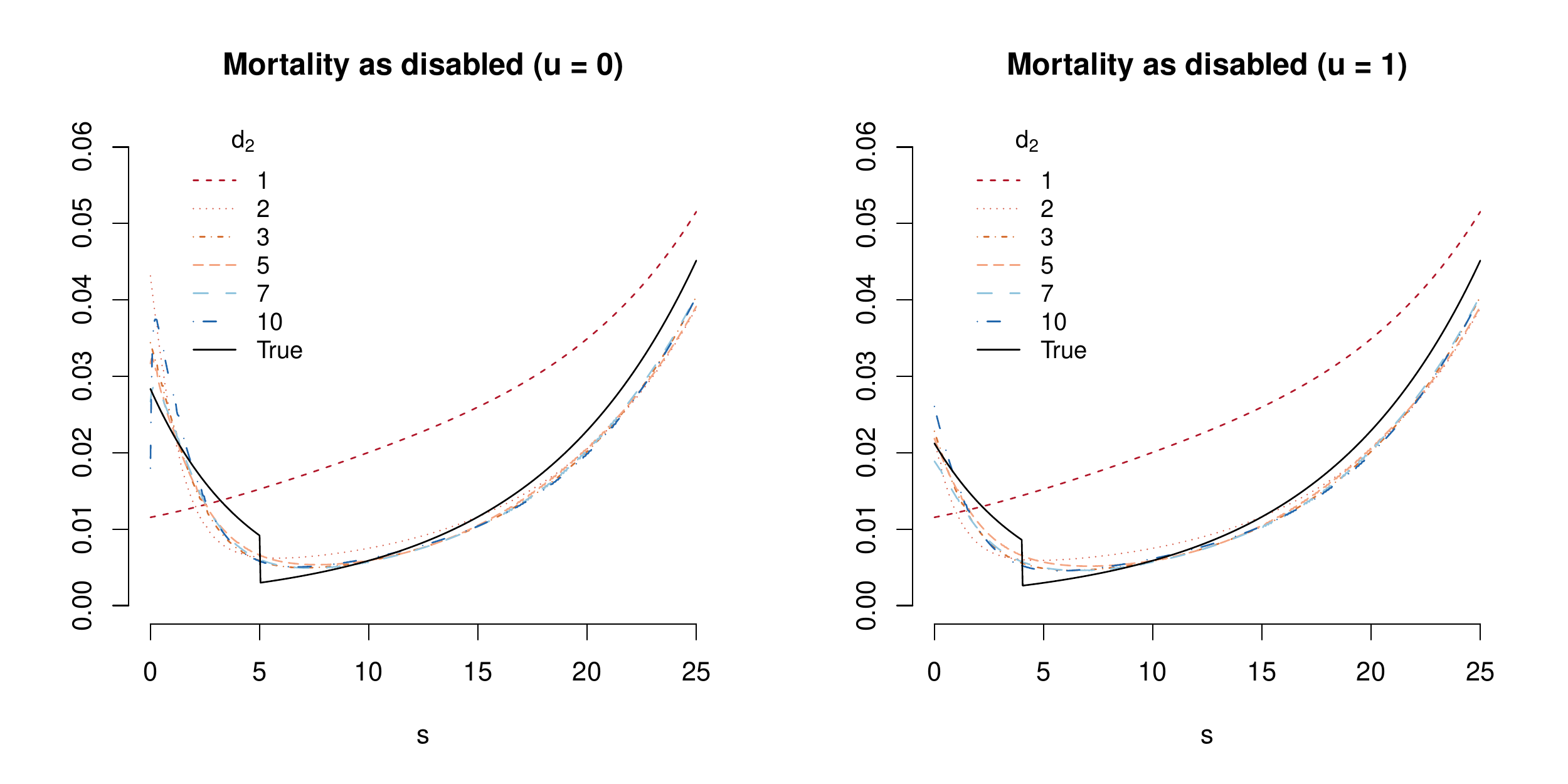} 
\caption{
{     
For $u=0,1$ illustration of the recovery rate $s \mapsto\nu_{21}(40+s,u+s)$ and the mortality as disabled $s \mapsto \nu_{23}(40+s,u+s)$ obtained from the aggregate Markov models fitted in~\cite{AhmadBladt}, confer with~\eqref{eq:int_reset}. The case $d_2 = 1$ corresponds to a Markov chain.} 
}
   \label{fig:rates_EM}
\end{figure}}

{
We emphasize that the particular simple type of duration dependence of the payments} allows for simplifications in the computation schemes similar to those from the duration-independent case of Subsection~\ref{subsec:dur_indep}. Indeed, the vector of state-wise expected cash flows now reads
\begin{align*}
\vect{a}_u(40,s) = 1_{(u+s-40>1/4)}\mat{\bar{p}}(40,{u},{s},1/4)\mat{E}_2\vect{1}_{d_2},  
\end{align*} 
where the elements of $\mat{\bar{p}}(40,s,u,1/4)$ can be calculated using Corollary~\ref{cor:semi_markov_prob}, confer also with Algorithm~\ref{alg:cf_reset}. Since we only need the transition probabilities at a single (and rather small) end duration $z=1/4$, the computational complexity is comparable with that of the duration-independent case, where only $z=0$ is needed.

The corresponding vector of state-wise prospective reserves is obtained by discounting and accumulating the vector of state-wise expected cash flows:
\begin{align*}
\mat{V}_u(40)
=
\int_{40}^{65} e^{-\int_{40}^s r(v) \, \mathrm{d}v} \vect{a}_u(40,s) \, \mathrm{d}s.
\end{align*}
For the interest rate, we use the forward rate curve published on the 3rd of November, 2022, by the Danish Financial Supervisory Authority. We are implicitly assuming that the financial market is independent of the state process of the insured.

We calculate the vectors of state-wise expected cash flows and corresponding prospective reserves across initial states and durations and for the various fits. We also calculate these quantities for the underlying true semi-Markov model, where we need to use Kolmogorov's forward integro-differential equation of~\cite[Section~3]{BuchardtMollerSchmidt}, since this model is not an aggregate Markov model. Figure~\ref{fig:cf_pfa} shows the resulting expected cash flows in the disability state for initial {duration $u=0,1$}, while Figure~\ref{fig:reserves_pfa} shows the prospective disability reserves as functions of the duration since the onset of disability. 

\begin{figure}[h!]
   \centering
   \includegraphics[width=0.85\textwidth,trim={8mm 4mm 8mm 0mm}]{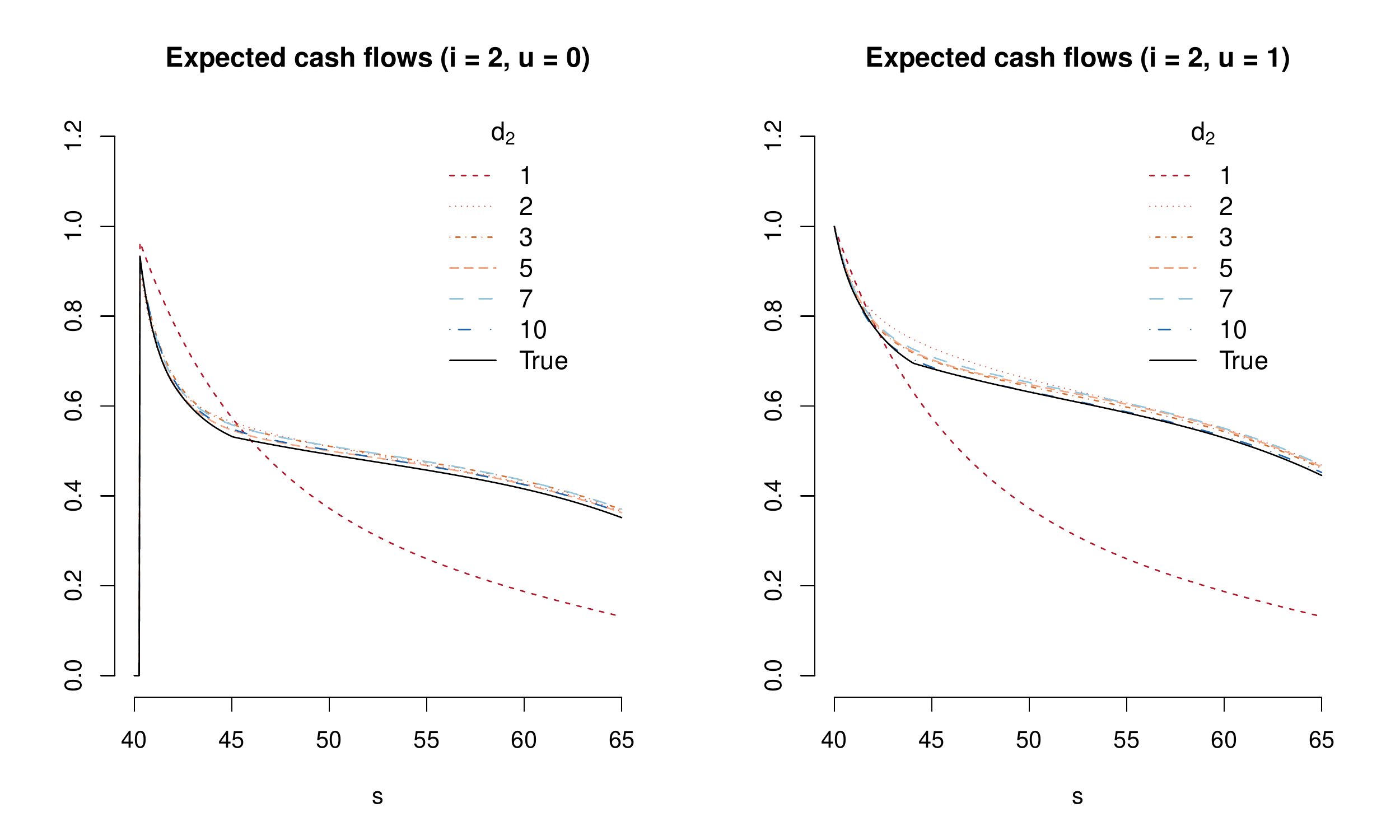}
   \caption{Expected cash flow $s\mapsto a_{2,u}(40,s)$ in the disability state with initial durations $u = 0$ (left) and $u=1$ (right) for different numbers of disability microstates, $d_2$, along with the true expected accumulated cash flows. The case $d_2 = 1$ corresponds to a Markov chain.}
   \label{fig:cf_pfa}
\end{figure}

\begin{figure}[h!]
   \centering
   \includegraphics[width=0.6\textwidth,trim={8mm 4mm 8mm 0mm}]{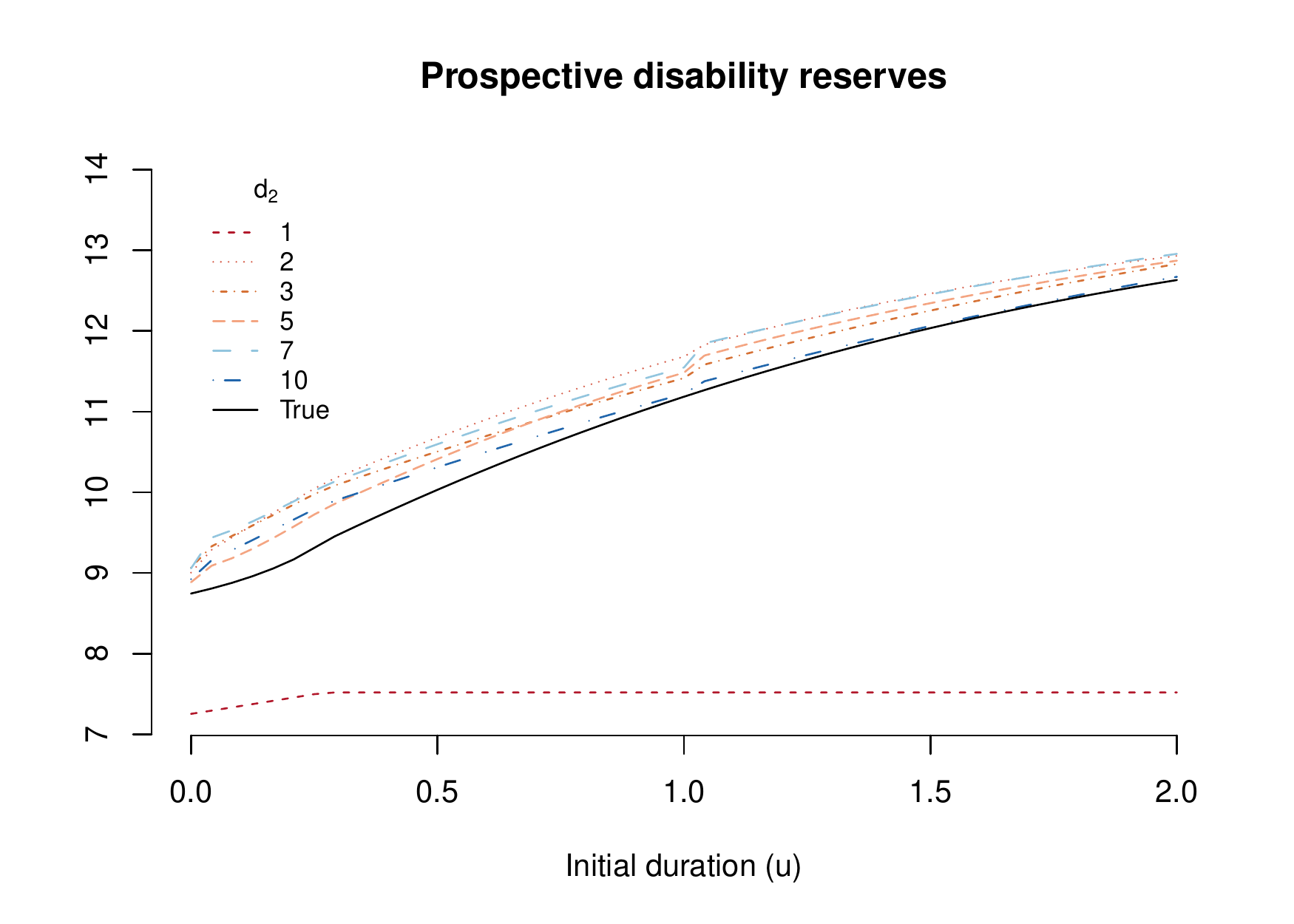}
   \caption{Prospective disability reserve as a function of duration since onset of disability, $u\mapsto V_{2,u}(40)$, for different numbers of disability microstates, $d_2$, along with true prospective reserve. The case $d_2 = 1$ corresponds to a Markov chain.}
   \label{fig:reserves_pfa}
\end{figure}

Since it is unable to capture the duration effects that are present, the Markov chain corresponding to $d_2 = 1$ performs, as anticipated, very badly. Maybe more surprisingly, the addition of just a single additional disability microstate corresponding to $d_2 = 2$ leads to significant improvements; this model may already be competitive, depending also on the trade-off between accuracy and computational load. Furthermore, and consistent with our expectations, the accuracy appears to further improve as the number of disability microstates, $d_2$, increases. {The latter point might be less clear from a purely visual standpoint, especially for $d_2 = 7$, but it is confirmed by the final log-likelihood values from the statistical analyses in~\cite{AhmadBladt}, confer also with Table~\ref{tab:log-liks}.

\begin{table}[h]
\centering
\begin{tabular}{c| rrrrrr}
 $d_2$ & 1 & 2 & 3 & 5 & 7 & 10 \\ \hline
 Log-likelihood & -26,315 & -24,324 & -24,241 & -24,153 & -24,132 & -24,115
  \end{tabular}
\caption{
{
Final log-likelihood values from the statistical analyses in~\cite{AhmadBladt}.}
} 
\label{tab:log-liks}
\end{table}

Another interesting observation is the differences in accuracy across regions of initial duration, $u$. For instance, while the model corresponding to $d_2 = 3$ outperforms the model corresponding to $d_2 = 5$ for $u \in [1,2]$, it is far from competitive for smaller $u$. As discussed in~\cite{AhmadBladt}, this is mainly due to the irregular (non-smooth) nature of the true transition rates in that region, see also Figure~\ref{fig:rates_EM}.}

{\section{Concluding remarks}\label{sec:conc}

In this paper, we introduce a new class of models for multi-state life insurance, known as aggregate Markov models, with a specific focus on the valuation of liabilities. The use of matrix analytic methods lead to a unifying and transparent treatment of both the probabilistic as well as the computational facets of the models. In contrast, the companion paper~\cite{AhmadBladt} is devoted to the statistical aspects.

In summary, aggregate Markov modelling offers a versatile approach for modelling multi-state life insurance contracts, retaining the advantageous analytical features of Markov chain modelling while providing added flexibility, including the incorporation of duration effects. Our theoretical results suggest that aggregate Markov modelling may have a competitive advantage over semi-Markov modelling when the contractual payments exhibit a sufficient level of simplicity in their duration dependence. This assertion is further supported by a numerical example.

Future research endeavors could address several open questions. For instance, the resolution of Conjecture~\ref{conj:dense}, which proposes that any (smooth) semi-Markovian model can be approximated arbitrarily well by an aggregate Markov model with the reset property, remains unresolved. An affirmative solution, achieved through an efficient construction, would perhaps encourage the wider adoption of aggregate Markov models.

Throughout this study, our focus has been solely on smooth models, meaning models with absolutely continuous sojourn time distributions. However, to unify modelling in discrete and continuous time or to capture phenomena like stochastic retirement, the inclusion of point probability mass in the distribution of jumps becomes necessary. Such an extension -- employing cumulative transition rates as opposted to ordinary transition rates -- seems straightforward in principle, albeit requiring new proofs.

Finally, the computation of higher-order moments of present values holds relevance, particularly for risk management purposes, as it would enable for instance the calculation of quantiles. In the case of duration-independent payments, we can compute higher-order moments based on the microstate process $\vect{X}$, which is Markovian, following along the same lines as in~\cite{Bladt2020} and leveraging the favorable structure of~\eqref{eq:nice_moments}. However, the case of duration-dependent payments is more involved and would require additional work beyond the scope of this paper.}  
\begin{appendices}

\section{Proofs}\label{ap:A}

This appendix contains the proofs of the results from Sections~\ref{sec:propZ}--\ref{sec:reserves_cf}. In the following, we denote by $\vect{e}_{\widetilde{j}}$ the $d_j$-dimensional vector with a one in entry $\widetilde{j}$ and otherwise zeroes. To prove Proposition~\ref{thm:MPP} and Theorem~\ref{thm:compensator}, we need the next lemma. 

\begin{lemma}\label{lemma:help}
For $t \geq t_n$ and $k \neq y_n$ it holds that
\begin{align*}
\mathbb{P}\big(t < T_{n+1} \leq t + h, \vect{X}(T_{n+1}) = \mat{\kay} \, \big| \, S_n = s_n \big)
=
\frac{\vect{\alpha}(s_n,t,k) \vect{e}_{\widetilde{k}}}{\vect{\alpha}(s_n)\vect{1}_{d_{y_n}}} h + o(h), \quad h \to 0,
\end{align*}
where the $d_{y_n}$-dimensional row vector $\vect{\alpha}(s_n)$ is given by
\begin{align*}
\vect{\alpha}(s_n) = \vect{\pi}_1(0)\displaystyle\prod_{\ell = 0}^{n-1}\,\Prodi_{t_{\ell}}^{t_{\ell+1}}\!\left(\mat{I}+\mat{M}_{y_\ell y_\ell}(x)\md x\right)\mat{M}_{y_\ell y_{\ell +1}}(t_{\ell+1}).
\end{align*}
\end{lemma}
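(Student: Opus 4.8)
The plan is to treat the asserted identity as a statement about densities of the marked point process: since $\{S_n = s_n\}$ has probability zero, I read the left-hand side as the ratio of the joint sub-density of $(S_n,T_{n+1},\vect{X}(T_{n+1}))$ at $(s_n,t,\mat{\kay})$ to the marginal density of $S_n$ at $s_n$, and then extract the coefficient of $h$. The whole computation rests on one structural fact about the Markov jump process $\vect{X}$: for a time-inhomogeneous Markov jump process, the probability of remaining within the microstates of a fixed macrostate $j$ throughout an interval $[a,b]$ and ending in a prescribed microstate is governed by the product integral $\Prodi_a^b(\mat{I}+\mat{M}_{jj}(x)\md x)$ of the corresponding sub-intensity block, whereas an instantaneous transition from the microstates of $j$ into microstate $(k,\widetilde{k})$ at time $x$ occurs at the rate given by the relevant entry of $\mat{M}_{jk}(x)$. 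The restriction $k\neq y_n$ guarantees that we are looking at a genuine macro-transition, i.e.\ a jump of $Z$, consistent with $T_{n+1}$ being the next jump time.

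First I would show, by induction on $n$ and repeated use of the (strong) Markov property of $\vect{X}$ at the successive jump times $t_1<\cdots<t_n$, that $\vect{\alpha}(s_n)$ is exactly the unnormalized sub-density vector of $\vect{X}(T_n)$ over the microstates $\{(y_n,1),\ldots,(y_n,d_{y_n})\}$ on the event $\{S_n=s_n\}$. Concretely, setting $\vect{a}_0=\vect{\pi}_1(0)$ and propagating via $\vect{a}_{\ell+1}=\vect{a}_\ell\Prodi_{t_\ell}^{t_{\ell+1}}(\mat{I}+\mat{M}_{y_\ell y_\ell}(x)\md x)\mat{M}_{y_\ell y_{\ell+1}}(t_{\ell+1})$ — alternating ``stay in the current macrostate'' with ``jump to the next macrostate'' — yields $\vect{a}_n=\vect{\alpha}(s_n)$. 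Marginalizing over the terminal microstate (multiplying by $\vect{1}_{d_{y_n}}$) and noting that the post-$t_n$ evolution integrates to one, the marginal density of $S_n$ at $s_n$ equals $\vect{\alpha}(s_n)\vect{1}_{d_{y_n}}$, which is the denominator.

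Next I would compute the numerator. On $\{S_n=s_n\}$, conditioning on no macro-transition during $[t_n,t]$, the microstate sub-density evolves to $\vect{\alpha}(s_n)\Prodi_{t_n}^{t}(\mat{I}+\mat{M}_{y_n y_n}(x)\md x)$. A macro-transition into $(k,\widetilde{k})$ during $(t,t+h]$ then contributes, to first order, the rate $\mat{M}_{y_n k}(t)\vect{e}_{\widetilde{k}}$ over an interval of length $h$, giving $\vect{\alpha}(s_n)\Prodi_{t_n}^{t}(\mat{I}+\mat{M}_{y_n y_n}(x)\md x)\mat{M}_{y_n k}(t)\vect{e}_{\widetilde{k}}\,h+o(h)$. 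Recognizing the prefactor as $\vect{\alpha}(s_n,t,k)\vect{e}_{\widetilde{k}}$, that is $\vect{\alpha}$ evaluated at the history $s_n$ extended by a jump to $k$ at time $t$, and dividing by $\vect{\alpha}(s_n)\vect{1}_{d_{y_n}}$ produces the claimed expression.

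The main obstacle will be the rigorous handling of the conditioning on the null event $\{S_n=s_n\}$ together with the uniform validity of the $o(h)$ expansion: I must argue that the first-order-in-$h$ jump probability survives the conditioning, which is precisely where the smoothness and regularity assumptions on $\mat{M}$ enter, ensuring that the product integrals are differentiable and that the higher-order contributions — two or more transitions in $(t,t+h]$, or an intra-macrostate move preceding the macro-jump — are genuinely $o(h)$. Phrasing the argument through the joint densities of the marked point process, rather than through literal conditional probabilities, is the cleanest route to making this precise.
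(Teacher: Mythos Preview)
Your proposal is correct and follows essentially the same route as the paper: an induction on $n$ that alternates ``stay in the current macrostate via $\Prodi(\mat{I}+\mat{M}_{y_\ell y_\ell}\md x)$'' with ``jump via $\mat{M}_{y_\ell y_{\ell+1}}$,'' combined with the first-order-in-$h$ expansion for the final macro-transition. The only organizational difference is that you first isolate the interpretation of $\vect{\alpha}(s_n)$ as the unnormalized joint sub-density of $(S_n,X_2(T_n))$ and then obtain the conditional probability as a ratio of densities, whereas the paper runs the induction directly on the conditional-probability identity itself, carrying an auxiliary small increment $\widetilde{h}$ around $t_{n+1}$ and dividing out at the end; the two are equivalent.
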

\begin{proof}
We give a proof by induction. First, we verify the identity for $n=0$. Note that $\vect{\alpha}(0,1)\vect{1}_{d_1} = 1$ and that
\begin{align*}
&\mathbb{P}\big(t < T_1 \leq t + h, \vect{X}(T_1) = \mat{\kay}\big) \\
&=
\sum_{\widetilde{y}_0} \mathbb{P}\big(t < T_1 \leq t + h, \vect{X}(T_1) = \mat{\kay}, \vect{X}(t) = (1,\widetilde{y}_0)\big) \\
&=
\sum_{\widetilde{y}_0} \bigg( \Big[ o(h) + \mathbb{P}\big(T_1 \leq t + h, \vect{X}(t+h) = \mat{\kay} \, \big| \, t < T_1, \vect{X}(t) = (1,\widetilde{y}_0)\big) \Big] \\
&\qquad\mathbb{P}\big(t < T_1, \vect{X}(t) = (1,\widetilde{y}_0)\big) \bigg) \\
&=
o(h) + h \sum_{\widetilde{y}_0} \mu_{(1,\widetilde{y}_0)\mat{\kay}}(t) \vect{\pi}_1(0) \Prodi_0^t \!\left(\mat{I} + \mat{M}_{11}(x)\md x\right) \vect{e}_{\widetilde{y}_0} \\
&=
o(h) + h \, \vect{\pi}_1(0) \Prodi_0^t \!\left(\mat{I} + \mat{M}_{11}(x)\md x\right) \mat{M}_{1k}(t) \vect{e}_{\widetilde{k}} \\
&=
o(h) + h \, \vect{\alpha}(s_1) \vect{e}_{\widetilde{k}}.
\end{align*}
Collecting results confirms the identity for $n=0$. Now suppose the identity holds for $n\in\mathbb{N}_0$. We want to establish the identity also for $n+1$. By assumption,
\begin{align*}
&\mathbb{P}\big(t_{n+1} < T_{n+1} \leq t_{n+1} + \widetilde{h}, \vect{X}(T_{n+1}) = (y_{n+1},\check{y}_{n+1}) \, \big| \, S_n = s_n\big) \\
&=
\frac{\vect{\alpha}(s_n, t_{n+1}, y_{n+1})\vect{e}_{\check{y}_{n+1}}}{\vect{\alpha}(s_n)\vect{1}_{d_{y_n}}} \widetilde{h} + o(\widetilde{h}).
\end{align*}
In particular,
\begin{align}\label{eq:lemma_help}
\begin{split}
&\mathbb{P}\big(t_{n+1} < T_{n+1} \leq t_{n+1} + \widetilde{h}, Y_{n+1} = y_{n+1} \, \big| \, S_n = s_n\big) \\
&=
\frac{\vect{\alpha}(s_n, t_{n+1}, y_{n+1})\vect{1}_{d_{y_{n+1}}}}{\vect{\alpha}(s_n)\vect{1}_{d_{y_n}}} \widetilde{h} + o(\widetilde{h}).
\end{split}
\end{align}
Furthermore, 
\begin{align*}
&\mathbb{P}\big(t < T_{n+2}, \vect{X}(t) = \mat{y_{n+1}} \, \big| \, T_{n+1} = t_{n+1}, \vect{X}(t_{n+1}) = (y_{n+1},\check{y}_{n+1})\big) \\
&=
\vect{e}_{\check{y}_{n+1}}^{\prime} \Prodi_{t_{n+1}}^t \! \left( \mat{I} + \mat{M}_{y_{n+1}y_{n+1}}(x)\md x\right) \vect{e}_{\widetilde{y}_{n+1}},
\end{align*}
so that
\begin{align*}
&\mathbb{P}\big(t < T_{n+2}, \vect{X}(t) = \mat{y_{n+1}}, t_{n+1} < T_{n+1} \leq t_{n+1} + \widetilde{h}, Y_{n+1} = y_{n+1} \, \big| \, S_n = s_n\big)  \\
&=
\frac{\vect{\alpha}(s_n, t_{n+1}, y_{n+1})}{\vect{\alpha}(s_n)\vect{1}_{d_{y_n}}} \Prodi_{t_{n+1}}^t \! \left( \mat{I} + \mat{M}_{y_{n+1}y_{n+1}}(x)\md x\right) \vect{e}_{\widetilde{y}_{n+1}} \widetilde{h} + o(\widetilde{h}).
\end{align*}
Using this result, we find that
\begin{align*}
&\mathbb{P}\big( t < T_{n+2} \leq t + h, \vect{X}(T_{n+2}) = \mat{\kay}, t_{n+1} < T_{n+1} \leq t_{n+1} + \widetilde{h}, Y_{n+1} = y_{n+1} \, \big| \, S_n = s_n\big) \\
&=
\sum_{\widetilde{y}_{n+1}} \bigg( \Big[ o(h) + \mathbb{P}\big(T_{n+2} \leq t + h, \vect{X}(t+h) = \mat{\kay} \, \big| \, t < T_{n+2}, \vect{X}(t) = \mat{y_{n+1}}\big) \Big] \\
&\qquad\mathbb{P}\big(t < T_{n+2}, \vect{X}(t) = \mat{y_{n+1}}, t_{n+1} < T_{n+1} \leq t_{n+1} + \widetilde{h}, Y_{n+1} = y_{n+1} \, \big| \, S_n = s_n\big) \bigg) \\
&=
\sum_{\widetilde{y}_{n+1}} \bigg( \Big[o(h) + \mu_{\mat{y_{n+1}}\mat{\kay}}(t) h \Big] \\
&\qquad \Big[ \frac{\vect{\alpha}(s_n, t_{n+1}, y_{n+1})}{\vect{\alpha}(s_n)\vect{1}_{d_{y_n}}} \Prodi_{t_{n+1}}^t \! \left( \mat{I} + \mat{M}_{y_{n+1}y_{n+1}}(x)\md x\right) \vect{e}_{\widetilde{y}_{n+1}} \widetilde{h} + o(\widetilde{h})\Big]\bigg),
\end{align*}
Combining this result with~\eqref{eq:lemma_help} allows us to conclude that
\begin{align*}
&\mathbb{P}\big( t < T_{n+2} \leq t + h, \vect{X}(T_{n+2}) = \mat{\kay} \, \big| \, S_{n+1} = s_{n+1}\big) \\
&=
o(h)
+
\frac{h \sum_{\widetilde{y}_{n+1}} \frac{\vect{\alpha}(s_n,t_{n+1},y_{n+1})}{\vect{\alpha}(s_n)\vect{1}_{d_{y_n}}} \Prodi_{t_{n+1}}^t \! \left( \mat{I} + \mat{M}_{y_{n+1}y_{n+1}}(x)\md x\right) \vect{e}_{\widetilde{y}_{n+1}} \mu_{\mat{y_{n+1}}\mat{\kay}}(t)}
{\frac{\vect{\alpha}(s_n,t_{n+1},y_{n+1})\vect{1}_{d_{y_{n+1}}}}{\vect{\alpha}(s_n)\vect{1}_{d_{y_n}}}} \\
&=
o(h)
+
h \frac{\vect{\alpha}(s_{n+1})}{\vect{\alpha}(s_{n+1})\vect{1}_{d_{y_{n+1}}}} \Prodi_{t_{n+1}}^t \! \left( \mat{I} + \mat{M}_{y_{n+1}y_{n+1}}(x)\md x\right) \mat{M}_{y_{n+1}k}(t) \vect{e}_{\widetilde{k}} \\
&=
o(h) + \frac{\vect{\alpha}(s_{n+1},t,k) \vect{e}_{\widetilde{k}}}{\vect{\alpha}(s_{n+1})\vect{1}_{d_{y_{n+1}}}} h.
\end{align*}
This establishes the identity for $n+1$ and thus completes the proof.
\end{proof}

\begin{proof}[Proof of Proposition~\ref{thm:MPP}]
From Lemma~\ref{lemma:help} we immediately get that $\mathbb{P}(T_{n+1} \leq t, Y_{n+1} = k \, | \, S_n = s_n)$ is absolutely continuous with respect to the Lebesgue measure with density
\begin{align*}
f^{(n+1)}(t, k | s_n)
&=
\frac{\vect{\alpha}(s_n, t, k)\vect{1}_{d_k}}{\vect{\alpha}(s_n)\vect{1}_{d_{y_n}}} \\
&=
\frac{\vect{\alpha}(s_n)}{\vect{\alpha}(s_n)\vect{1}_{d_{y_n}}} \Prodi_{t_n}^t \left(\mat{I}+\mat{M}_{y_n y_n}(x)\md x\right)\mat{M}_{y_n k}(t) \vect{1}_{d_k}
\end{align*}
for $t > t_n$ and $k \neq y_n$. In particular, $1 - \bar{F}^{(n+1)}(t|s_n)$ is absolutely continuous with respect to the Lebesgue measure with density
\begin{align}\label{eq:density_help}
\begin{split}
f^{(n+1)}(t|s_n)
&=
\sum_{k \neq y_n} \frac{\vect{\alpha}(s_n)}{\vect{\alpha}(s_n)\vect{1}_{d_{y_n}}} \Prodi_{t_n}^t \left(\mat{I}+\mat{M}_{y_n y_n}(x)\md x\right)\mat{M}_{y_n k}(t) \vect{1}_{d_k} \\
&=
\frac{\vect{\alpha}(s_n)}{\vect{\alpha}(s_n)\vect{1}_{d_{y_n}}} \Prodi_{t_n}^t \left(\mat{I}+\mat{M}_{y_n y_n}(x)\md x\right)\mat{m}_{y_n}(t)
\end{split}
\end{align}
for $t > t_n$; confer also with~\eqref{eq:exit_rates}. Based on for instance the forward equations for product integrals, see~\cite[Proposition 5 and 6]{GillJohansen}, we may then argue that
\begin{align*}
\bar{F}^{(n+1)}(t|s_n) =   \frac{\vect{\alpha}(s_n)}{\vect{\alpha}(s_n)\vect{1}_{d_{y_n}}} \Prodi_{t_n}^t \left(\mat{I}+\mat{M}_{y_n y_n}(x)\md x\right)\vect{1}_{d_{y_n}}, \quad t \geq t_n,
\end{align*}
which proves the first assertion of the proposition. For the second part, we let $k\neq y_n$ and find that
\begin{align*}
G^{(n+1)}(k | s_n, t_{n+1})
&=
\frac{f^{(n+1)}(t_{n+1}, k | s_n)}{f^{(n+1)}(t_{n+1} | s_n)} \\
&=
\frac{
\frac{\vect{\alpha}(s_n)}{\vect{\alpha}(s_n)\vect{1}_{d_{y_n}}} \Prodi_{t_n}^{t_{n+1}} \left(\mat{I}+\mat{M}_{y_n y_n}(x)\md x\right)\mat{M}_{y_n k}(t_{n+1}) \vect{1}_{d_k}
}
{
\frac{\vect{\alpha}(s_n)}{\vect{\alpha}(s_n)\vect{1}_{d_{y_n}}} \Prodi_{t_n}^{t_{n+1}} \left(\mat{I}+\mat{M}_{y_n y_n}(x)\md x\right)\mat{m}_{y_n}(t_{n+1})
} \\
&=
\frac{
\vect{\alpha}(s_n) \Prodi_{t_n}^{t_{n+1}} \left(\mat{I}+\mat{M}_{y_n y_n}(x)\md x\right)\mat{M}_{y_n k}(t_{n+1}) \vect{1}_{d_k}
}
{
\vect{\alpha}(s_n) \Prodi_{t_n}^{t_{n+1}} \left(\mat{I}+\mat{M}_{y_n y_n}(x)\md x\right)\mat{m}_{y_n}(t_{n+1})
},
\end{align*} 
as desired.
\end{proof}
\begin{proof}[Proof of Theorem~\ref{thm:compensator}]
In the previous proof, we already noted that $1 - \bar{F}^{(n+1)}(t|s_n)$ is absolutely continuous with respect to the Lebesgue measure with density 
\begin{align*}
f^{(n+1)}(t|s_n) = \frac{\vect{\alpha}(s_n)}{\vect{\alpha}(s_n)\vect{1}_{d_{y_n}}} \Prodi_{t_n}^t \left(\mat{I}+\mat{M}_{y_n y_n}(x)\md x\right)\mat{m}_{y_n}(t)
\end{align*}
for $t > t_n$; see in particular~\eqref{eq:density_help}. Then $\mathrm{d}\Lambda_{jk}(t) = \lambda_{jk}(t) \, \mathrm{d}t$ with
\begin{align*}
\lambda_{jk} (t) = \sum_{n\in \N_0}1_{\left(T_{n} ,\, T_{n+1} \right]}(t)1_{\left(Y _{n} = j\right)} \frac{f^{(n+1)}\!\left(t|S_{n-1} , T_n , j  \right)}{\bar{F}^{(n+1)}\!\left(t|S_{n-1} , T_n , j  \right)}G^{(n+1)}\!\left(k|S _{n-1}, T _n, j, t\right)\!,
\end{align*}
confer with~\cite[Proposition 4.4.1(b)(ii)]{jacobsen2006}. The result now follows from inserting the expressions for~$\bar{F}^{(n+1)}$ and $G^{(n+1)}$ obtained in Proposition~\ref{thm:MPP} along with the above expression for $f^{(n+1)}$.  
\end{proof}
\begin{proof}[Proof of Theorem~\ref{thm:p}]
Due to the decomposition
\begin{align*}
\bar{p}_{\mat{\jay}}(t,s,z) = 
\sum_{n\in\mathbb{N}_0}
1_{\left[T_{n} ,\, T_{n+1} \right)}(t)\mathbb{P}\big(\vect{X}(s) = \mat{\jay}, U(s) > z \, | \, T_{n+1} > t, S_n\big),
\end{align*}
it suffices to show that
\begin{align*}
\mathbb{P}\big(\vect{X}(s) = \mat{\jay}, U(s) > z \, | \, T_{n+1} > t, S_n = s_n\big)
=
\widetilde{p}_{\mat{\jay}}(t,s,z; s_{n}),
\end{align*}
whenever they are well-defined. The case $t_n \geq s - z$ is trivial, so suppose in the following that $t_n < s - z$. We find that
\begin{align*}
&\mathbb{P}\big(\vect{X}(s) = \mat{\jay}, U(s) > z \, | \, T_{n+1} > t, S_n = s_n\big) \\
&=
\sum_{\widetilde{y}_n}
\frac
{
\mathbb{P}\big(\vect{X}(s) = \mat{\jay}, U(s) > z, T_{n+1} > t, \vect{X}(t) = \mat{y_n} \, | \, S_n = s_n\big)
}
{
\mathbb{P}\big(T_{n+1} > t \, | \, S_n = s_n\big)
} \\
&=
\sum_{\widetilde{y}_n} \bigg(
\frac
{
\mathbb{P}\big(T_{n+1} > t, \vect{X}(t) = \mat{y_n} \, | \, S_n = s_n\big)
}
{
\mathbb{P}\big(T_{n+1} > t \, | \, S_n = s_n\big)
} \\
&\quad\quad\quad\quad\mathbb{P}\big(\vect{X}(s) = \mat{\jay}, U(s) > z \, | \, T_{n+1} > t, \vect{X}(t) = \mat{y_n}, S_n = s_n\big) \bigg).
\end{align*}
According to the first statement of Proposition~\ref{thm:MPP},
\begin{align*}
\mathbb{P}\big(T_{n+1} > t \, | \, S_n = s_n\big)
=
\frac{\vect{\alpha}(s_n)}{\vect{\alpha}(s_n)\vect{1}_{d_{y_n}}} \Prodi_{t_n}^t \left(\mat{I}+\mat{M}_{y_n y_n}(x)\md x\right)\vect{1}_{d_{y_n}}.
\end{align*}
Also, using similar techniques as in the proof of Proposition~\ref{thm:MPP} and referring to Lemma~\ref{lemma:help}, one may show that
\begin{align*}
\mathbb{P}\big(T_{n+1} > t, \vect{X}(t) = \mat{y_n} \, | \, S_n = s_n\big)
=
\frac{\vect{\alpha}(s_n)}{\vect{\alpha}(s_n)\vect{1}_{d_{y_n}}} \Prodi_{t_n}^t \left(\mat{I}+\mat{M}_{y_n y_n}(x)\md x\right)\vect{e}_{\widetilde{y}_n}.
\end{align*}
If $s - z \leq t$, then
\begin{align*}
&\mathbb{P}\big(\vect{X}(s) = \mat{\jay}, U(s) > z \, | \, T_{n+1} > t, \vect{X}(t) = \mat{y_n}, S_n = s_n\big) \\
&=
1_{(j = y_n)}
\mathbb{P}\big(\vect{X}(s) = \mat{\jay}, T_{n+1} > s \, | \, T_{n+1} > t, \vect{X}(t) = \mat{y_n}, S_n = s_n\big) \\
&=
1_{(j = y_n)}
\vect{e}^{\prime}_{\widetilde{y}_n}\Prodi_t^s \left(\mat{I}+\mat{M}_{y_n y_n}(x)\md x\right)\vect{e}_{\widetilde{j}},
\end{align*}
so that
\begin{align*}
\mathbb{P}\big(\vect{X}(s) = \mat{\jay}, U(s) > z \, | \, T_{n+1} > t, S_n = s_n\big)
=
1_{(j = y_n)}
\frac
{
\vect{\alpha}(s_{n})\displaystyle \Prodi_{t_{n}}^s\!\left(\mat{I}+\mat{M}_{y_ny_n}(x)\mathrm{d} x\right)\vect{e}_{\widetilde{j}}
}
{
\vect{\alpha}(s_{n})\displaystyle \Prodi_{t_{n}}^t\!\left(\mat{I}+\mat{M}_{y_ny_n}(x)\mathrm{d} x\right)\vect{1}_{d_{y_n}}
},
\end{align*}
which exactly equals $\widetilde{p}_{\mat{\jay}}(t,s,z; s_{n})$, confer with~\eqref{eq:yn_is_j}. If instead $s - z > t$, then the Markov property of $\vect{X}$ yields
\begin{align*}
&\mathbb{P}\big(\vect{X}(s) = \mat{\jay}, U(s) > z \, | \, T_{n+1} > t, \vect{X}(t) = \mat{y_n}, S_n = s_n\big) \\
&=
\sum_{\check{j}}
\mathbb{P}\big(\vect{X}(s) = \mat{\jay}, U(s) > z, \vect{X}(s-z) = (j,\check{j}) \, | \, \vect{X}(t) = \mat{y_n}\big) \\
&=
\sum_{\check{j}}
\mathbb{P}\big(\vect{X}(s) = \mat{\jay}, U(s) > z \, | \, \vect{X}(s-z) = (j,\check{j})) \mathbb{P}(\vect{X}(s-z) = (j,\check{j}) \, | \, \vect{X}(t) = \mat{y_n}\big) \\
&=
\sum_{\check{j}} \vect{e}^{\prime}_{\check{j}} \Prodi_{s-z}^s\!\left(\mat{I}+\mat{M}_{jj}(x)\mathrm{d} x\right)\vect{e}_{\widetilde{j}} \vect{e}^{\prime}_{\mat{y_n}} \Prodi_t^{s-z}\!\left(\mat{I}+\mat{M}(x)\mathrm{d} x\right)\vect{e}_{\mat{\jay}} \\
&=
\vect{e}^{\prime}_{\mat{y_n}} \Prodi_t^{s-z}\!\left(\mat{I}+\mat{M}(x)\mathrm{d} x\right) \mat{E}_j  \Prodi_{s-z}^s\!\left(\mat{I}+\mat{M}_{jj}(x)\mathrm{d} x\right)\vect{e}_{\widetilde{j}}.
\end{align*}
Collecting results completes the proof.
\end{proof}

\end{appendices} 



\bibliographystyle{plain}
\bibliography{MarkovAdditiveMain.bbl}{}

\end{document}